\author{Álvaro Martínez-Pérez\footnote{Partially supported by MTM 2006-00825}}
\date{Departamento de Geometría y Topología, Universidad Complutense de Madrid. Madrid 28040, Spain\\
        e-mail: alvaro\_martinez@mat.ucm.es}
\title{Quasi-isometries between visual hyperbolic spaces.}
\begin{document}
\maketitle

\newtheorem{definicion}{Definition}[section]
\newtheorem{nota}[definicion]{Remark}
\newtheorem{prop}[definicion]{Proposition}
\newtheorem{lema}[definicion]{Lemma}
\newtheorem{obs}[definicion]{Remark}
\newtheorem{teorema}[definicion]{Theorem}
\newtheorem{cor}[definicion]{Corollary}
\newtheorem{ejp}[definicion]{Example}

\begin{abstract} We prove that a PQ-symmetric homeomorphism
between two complete metric spaces can be extended to a
quasi-isometry between their hyperbolic approximations.

This result is used to prove that two visual Gromov hyperbolic
spaces are quasi-isometric if and only if there is a PQ-symmetric
homeomorphism between their boundaries.
\end{abstract}

\section{Introduction.}

There are many results studying the geometry of hyperbolic spaces
from a large scale point of view by looking at the boundary. Many
of them are motivated by questions about Gromov hyperbolic groups
and the results involve group actions or other techniques out of
the geometric framework. For example, F. Paulin (see \cite{Pau})
characterizes, from the boundary, the quasi-isometries between
Gromov hyperbolic spaces under the assumption that there is a
group acting isometrically and co-compactly on them. There are
also many further results involving group actions, considering
quasi-conformal structures on the boundary.

Other works, like \cite{BoS} and \cite{BS}, which is the main
source for this paper, restrict themselves to Gromov hyperbolic
spaces as geometric objects. In \cite{BoS} appears some useful
construction, the hyperbolic cone $X$ over a bounded metric space
$Z$, which is a hyperbolic space whose boundary is identified with
$Z$, $\partial_\infty X =Z$, and where the original metric in $Z$
is a visual metric for $\partial_\infty X$. Then, they prove that
PQ-symmetric maps between bounded metric spaces can be extended to
quasi-isometries between their hyperbolic cones.

In \cite{BS}, S. Buyalo and V. Schroeder introduce a special kind
of hyperbolic cones called hyperbolic approximations, which are
defined in general for non-necessarily bounded metric spaces. This
hyperbolic approximation has the advantage of being geodesic
(while the hyperbolic cone is only roughly geodesic) and also, of
including in the construction fixed coverings by balls of the
metric space. With this, they obtain some extension of the
mentioned result in of M. Bonk and O. Schramm proving that
quasi-symmetric homeomorphisms between uniformly perfect, complete
metric spaces can be extended to quasi-isometries between the
hyperbolic approximations and they characterize from the boundary
the quasi-isometry type of visual hyperbolic spaces with uniformly
perfect boundary.

Herein, we generalize this result for complete metric spaces and
therefore, give a necessary and sufficient condition on the map
between the boundaries for two visual hyperbolic spaces to be
quasi-isometric.

The main results would be the following.

\begin{teorema}\label{tma} For any PQ-symmetric homeomorphism $f:Z\to Z'$ of complete
metric spaces, there is a quasi-isometry of their hyperbolic
approximations $F: X \to X'$ which induces $f$, $\partial_\infty
F(z)=f(z) \, \forall z\in Z$. Moreover, this quasi-isometry can be
made continuous.
\end{teorema}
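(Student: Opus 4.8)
The plan is to follow the strategy used by Bonk and Schramm \cite{BoS} for hyperbolic cones and by Buyalo and Schroeder \cite{BS} for hyperbolic approximations of uniformly perfect spaces, the new ingredient being that the \emph{power} form of the control function $\eta$ (rather than uniform perfectness of the boundary) is what makes the scale--to--scale estimates uniform, so that they assemble into a genuine quasi-isometry and not merely a coarse correspondence at one scale. Fix the parameter $r\in(0,1)$ common to the two hyperbolic approximations $X$ and $X'$, with vertex sets $V=\bigsqcup_k V_k$ and $V'=\bigsqcup_l V'_l$, where $V_k\subseteq Z$ and $V'_l\subseteq Z'$ are maximal $r^k$- and $r^l$-separated sets. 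I use freely from \cite{BS} that $X$ is a geodesic hyperbolic graph, that $\partial_\infty X$ is canonically $Z$ (complete, so no completion is needed) with the given metric visual, and the standard estimate for the word metric: there is $c_0=c_0(r)$ with $\bigl|\,|vv'|_X-((k-m)+(k'-m))\,\bigr|\le c_0$ for $v\in V_k$, $v'\in V_{k'}$, where $m=\min\{k,\,k',\,\log_r d_Z(v,v')\}$ (with $\log_r 0=+\infty$); the analogue holds in $X'$. Thus, up to bounded error, distances in a hyperbolic approximation are computed from levels and from a single $\log_r$ of a boundary distance.

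First I would define $F$ on vertices. For $v\in V_k$ set $\rho(v):=\sup\{d_{Z'}(f(v),f(x)) : x\in Z,\ d_Z(v,x)\le r^k\}$, the radius about $f(v)$ of the image of the ball carried by $v$ (finite since $f$ is a homeomorphism and either $\bar B(v,r^k)\neq Z$ or $Z$ is bounded). When this quantity degenerates --- which can occur since $Z$ need not be perfect, e.g.\ $v$ isolated at scale $r^k$ --- one instead freezes the value from the coarsest level at which $v$'s ball was nondegenerate and lets it drift down by one per level; this floor is exactly what keeps the rays of $X$ converging to isolated boundary points from being collapsed. Put $n(v):=\lfloor\log_r\rho(v)\rfloor$ and let $F(v)\in V'_{n(v)}$ be a vertex nearest to $f(v)$, so $d_{Z'}(F(v),f(v))\le r^{n(v)}$. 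Finally extend $F$ across each edge of $X$ by a geodesic of $X'$ joining the images of its endpoints; the result is continuous by construction, and once $F$ is shown to be coarsely quasi-isometric on vertices these bounded modifications on edges do not affect the estimates, which settles the ``moreover'' clause.

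The substantive steps are three. (1) \emph{Roundness:} quasi-symmetry gives $\bar B'(f(v),\eta(1)^{-1}\rho(v))\subseteq f(\bar B(v,r^k))\subseteq\bar B'(f(v),\rho(v))$, so the image of $v$'s ball is a $Z'$-ball of radius $\asymp\rho(v)=r^{n(v)}$ up to the fixed factor $\eta(1)$. (2) \emph{Uniform scale control, the crux:} iterating the quasi-symmetry inequality one shows that $n(v)$ is, up to $O(1)$, an affine function of the level $k$ with slope pinned between $\alpha$ and $1/\alpha$, uniformly over all $v$, and likewise $\log_r d_{Z'}(f(v),f(v'))$ is controlled by $\log_r\rho(v)$ with the same affine slopes; here the power form $\eta(t)\asymp\max\{t^\alpha,t^{1/\alpha}\}$ is essential, since after $\log_r$ it becomes an affine function of $\log_r t$, which is precisely how multiplicative control at all scales turns into the affine control of a quasi-isometry. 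A by-product is that for an edge $v\sim v'$ one gets $|n(v)-n(v')|=O(1)$ and $d_{Z'}(F(v),F(v'))=O(r^{n(v)})$, so $F$ is coarsely Lipschitz. (3) \emph{Quasi-isometric embedding:} combining (1)--(2) with the metric formula, the meeting level of $F(v),F(v')$ in $X'$ is $\Phi(m)+O(1)$, where $m$ is the meeting level of $v,v'$ in $X$ and $\Phi$ is the roughly affine level-change map; substituting into the formulas for $|vv'|_X$ and $|F(v)F(v')|_{X'}$ gives $\alpha|vv'|_X-C\le|F(v)F(v')|_{X'}\le\alpha^{-1}|vv'|_X+C$.

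It remains to obtain surjectivity, the boundary identification, and to wrap up. Since the inverse of a power quasi-symmetric homeomorphism is again power quasi-symmetric, the same recipe applied to $f^{-1}$ produces $G\colon X'\to X$, and a direct check with the metric formula shows $G\circ F$ and $F\circ G$ lie at bounded distance from the identities, so $F$ is coarsely surjective and hence a quasi-isometry. For $\partial_\infty F=f$: along any vertex sequence $v_i\in V_{k_i}$ with $k_i\to\infty$ converging to $z\in Z$ one has $\rho(v_i)\to0$ by continuity of $f$, hence $n(v_i)\to\infty$ and $F(v_i)$ stays within $r^{n(v_i)}\to0$ of $f(v_i)\to f(z)$; passing to the boundary gives $\partial_\infty F(z)=f(z)$. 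I expect step (2) to be the main obstacle: producing the scale-uniform, affine-type estimates for the image-radius function in the absence of uniform perfectness --- including the correct treatment of the degenerate scales --- is the heart of the matter, while everything else is bookkeeping with the hyperbolic-approximation metric.
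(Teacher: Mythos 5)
Your overall architecture is the same as the paper's: assign to each vertex of $X$ a level in $X'$ determined by the size of the image of its ball, convert the power-type control function into affine control of levels by taking $\log_r$, compare branch/meeting levels, and read off the boundary map (your surjectivity-via-$f^{-1}$ is a harmless substitute for the paper's net argument using metric properness, Proposition \ref{mp}). The gap sits exactly where you say the crux is, and it is not merely technical: the assertion in your step (2), that $n(v)=\lfloor\log_r\rho(v)\rfloor$ is up to $O(1)$ an affine function of the level $k$ with slope in $[\alpha,1/\alpha]$ \emph{uniformly over all} $v$, is false precisely when $Z$ is not uniformly perfect --- the very case this theorem adds to Buyalo--Schroeder. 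Along a radial geodesic the ball $B(v)$ can remain \emph{constant as a subset of} $Z$ over arbitrarily many levels: take $Z\supset\{0,r^M,1\}$ with $M$ huge and $f=\mathrm{id}$; then the vertex $0\in V_k$ has $\rho(v)\asymp r^M$ for every $k$ in a window of length $\asymp M$, so all these vertices receive the same $n(v)\approx M$ and land within $O(1)$ of each other in $X'$, while their mutual distances in $X$ are as large as $\asymp M$. The lower bound $\alpha|vv'|_X-C\le|F(v)F(v')|_{X'}$ therefore fails. Your ``freeze and drift down one level at a time'' patch only engages when $\rho(v)=0$ (genuinely isolated points); the stagnant-but-nondegenerate regime, which is what actually occurs between consecutive splitting points, is untouched by it.

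The repair is the content of the paper's Claims 2, 4 and 5. The identity $k\approx\log_r \mathrm{diam}\, B(v)$ --- which is what allows PQ-symmetry (a statement about ratios of \emph{diameters}, not radii) to be converted into affine control of \emph{levels} --- holds only at splitting points (Lemma \ref{diam1}: $r^{k+1}\le \mathrm{diam}\,B(v_k)\le 4r^k$ there). So one defines $F$ by the diameter recipe only on the set $|V|$ of splitting points, proves the affine level estimate for nested pairs of splitting points, and then extends $F$ to the intermediate vertices by \emph{linear interpolation along the radial geodesic} joining the two adjacent splitting points (whose image levels are comparably separated by the splitting-point estimate), keeping the isometric-ray convention only for the truly degenerate rays toward isolated points. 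With that modification your steps (1)--(3) and the wrap-up go through essentially as you describe.
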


\begin{cor} Let $X,X'$ be visual hyperbolic geodesic spaces. Then,
any PQ-symmetric homeomorphism $f:\partial_\infty X \to
\partial_\infty X'$ can be extended to a quasi-isometry $F:X \to
X'$.
\end{cor}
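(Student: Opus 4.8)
The plan is to factor the desired quasi-isometry through the hyperbolic approximations of the boundaries and then appeal to Theorem~\ref{tma}. Fix visual metrics on $Z:=\partial_\infty X$ and $Z':=\partial_\infty X'$ with respect to which $f$ is PQ-symmetric; since $X$ and $X'$ are Gromov hyperbolic, $Z$ and $Z'$ are bounded and complete (the boundary of a Gromov hyperbolic space is complete for any visual metric). Let $Y$ and $Y'$ denote their hyperbolic approximations, so that $\partial_\infty Y = Z$ and $\partial_\infty Y' = Z'$ canonically, the chosen metrics being visual for these boundaries. By Theorem~\ref{tma} there is a (continuous) quasi-isometry $F_0 : Y \to Y'$ with $\partial_\infty F_0 = f$.

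It remains to relate $X$ with $Y$ and $X'$ with $Y'$. Here I would invoke the result of Buyalo--Schroeder \cite{BS} that a visual Gromov hyperbolic geodesic space is quasi-isometric to the hyperbolic approximation of its boundary by a quasi-isometry inducing the identity on the boundary: there are quasi-isometries $g : X \to Y$ and $g' : X' \to Y'$ with $\partial_\infty g = \mathrm{id}_Z$ and $\partial_\infty g' = \mathrm{id}_{Z'}$. Choosing a quasi-inverse $\bar g'$ of $g'$ and setting $F := \bar g' \circ F_0 \circ g$, the map $F : X \to X'$ is a composition of quasi-isometries between geodesic hyperbolic spaces, hence a quasi-isometry. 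Since taking the boundary at infinity is functorial for quasi-isometries of geodesic hyperbolic spaces and sends quasi-inverses to inverse homeomorphisms, $\partial_\infty F = (\partial_\infty g')^{-1} \circ \partial_\infty F_0 \circ \partial_\infty g = f$; thus $F$ extends $f$, as required.

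The one delicate point is that both the visual metric on a boundary and the hyperbolic approximation involve auxiliary choices --- the visual parameter (which modifies a visual metric by a snowflake transformation $d \mapsto d^{\varepsilon}$) and the fixed ball coverings entering the construction --- so one must check that ``being PQ-symmetric'' does not depend on them. This follows from the stability of the class of PQ-symmetric maps: precomposing and postcomposing a PQ-symmetric map with snowflake maps again yields a PQ-symmetric map, and different admissible coverings produce bi-Lipschitz-equivalent boundaries. I expect this stability statement, rather than the formal composition above, to be the real content that needs to be nailed down, and it is most naturally recorded as a lemma preceding the corollary.
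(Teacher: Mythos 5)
Your proposal is correct and is essentially the paper's argument: the corollary is obtained by combining Theorem~\ref{tma} with the Buyalo--Schroeder result (Theorem~\ref{cor6} here) that a visual hyperbolic geodesic space is roughly similar to any hyperbolic approximation of its boundary, and then composing through the approximations. Your closing remark about independence of the auxiliary choices is exactly what the ``with respect to any visual metrics'' clause and the last assertion of Theorem~\ref{cor6} are meant to absorb.
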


\begin{cor} Two visual hyperbolic geodesic spaces $X,\ Y$ are quasi-isometric
if and only if there is a PQ-symmetric homeomorphism $f$ with
respect to any visual metrics between their boundaries with base
points in $X$, $Y$ respectively.
\end{cor}

\textbf{Acknowledgments.} The author would like to express his
gratitude to Bruce Hughes, for his help and support producing this
work.

\section{The boundary at infinity of a Gromov hyperbolic space.}

We recall some basic definitions about Gromov hyperbolic spaces.
There are many references where a more detailed and deeper
exposition can be found. Let us cite among them just the work of
Gromov \cite{Gr}, and the well known book of Ghys and de la Harpe,
\cite{G-H}.

Let $X$ be a metric space. Fix a base point $o\in X$ and for
$x,x'\in X$ put $(x|x')_o=\frac{1}{2}(|xo|+|x'o|-|xx'|)$ where
$|xy|$ denotes the distance between $x,y$. The number $(x|x')_o$
is non-negative and it is called the \emph{Gromov product} of
$x,x'$ with respect to $o$.

\begin{definicion} A metric space $X$ is \emph{(Gromov)
hyperbolic} if it satisfies the $\delta$-inequality
\[(x|y)_o\geq min\{(x|z)_o,(z|y)_o\}-\delta\] for some $\delta\geq
0$, for every base point $o\in X$ and all $x,y,z \in X$.
\end{definicion}

Let $X$ be a hyperbolic space and $o\in X$ a base point. A
sequence of points $\{x_i\}\subset X$ \emph{converges to infinity}
if \[\lim_{i,j\to \infty} (x_i|x_j)_o=\infty.\] This property is
independent of the choice of $o$ since
\[|(x|x')_o-(x|x')_{o'}|\leq |oo'|\] for any $x,x',o,o' \in X$.

Two sequences $\{x_i\},\{x'_i\}$ that converge to infinity are
\emph{equivalent} if \[\lim_{i\to \infty} (x_i|x'_i)_o=\infty.\]
Using the $\delta$-inequality, we easily see that this defines an
equivalence relation for sequences in $X$ converging to infinity.
The \emph{boundary at infinity} $\partial_\infty X$ of $X$ is
defined to be the set of equivalence classes of sequences
converging to infinity.

The notion of Gromov product can be extended to points in the
boundary. Let $\xi,\xi'\in \partial_\infty X$. Define their Gromov
product by
\begin{equation} \label{gr} (\xi|\xi')_o=\inf \underset{i\to \infty}{\lim
\inf}(x_i|x'_i)_o \end{equation} where the infimum is taken over
all sequences $\{x_i\}\in \xi$ and $\{x'_i\}\in \xi'$.

A metric $d$ in $\partial_\infty X$ is said to be \emph{visual} if
there are $o\in X$, $a>1$ and positive constants $c_1,c_2$ such
that \[c_1a^{-(\xi|\xi')_o} \leq d(\xi | \xi')_o \leq
c_2a^{-(\xi|\xi')_o}\] for all $\xi,\xi'\in \partial_\infty X$. In
this case, we say that $d$ is a \emph{visual metric} with respect
to the base point $o$ and the parameter $a$.

\begin{teorema} Let $X$ be a hyperbolic space. Then for any $o\in
X$, there is $a_0>1$ such that for every $a\in (1,a_0]$ there
exists a metric $d$ on $\partial_\infty X$ which is visual with
respect to $o$ and $a$.
\end{teorema}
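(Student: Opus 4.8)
The plan is the classical Frink-style metrization argument applied to the function $\xi,\xi'\mapsto a^{-(\xi|\xi')_o}$.

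First I would extend the $\delta$-inequality from $X$ to its boundary. Using the definition \eqref{gr} of the Gromov product on $\partial_\infty X$, together with the $\delta$-inequality and the estimate $|(x|x')_o-(x|x')_{o'}|\le|oo'|$, one checks that there is a constant $\delta'$, depending only on $\delta$, such that
\[(\xi|\xi'')_o\ \ge\ \min\{(\xi|\xi')_o,(\xi'|\xi'')_o\}-\delta'\]
for all $\xi,\xi',\xi''\in\partial_\infty X$, and that $(\xi|\xi')_o<\infty$ if and only if $\xi\neq\xi'$. The passage through the $\liminf$ and $\inf$ in \eqref{gr} is what forces $\delta'$ to be a bounded multiple of $\delta$ rather than $\delta$ itself; making this bookkeeping precise is the one place that needs care before the main step.

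Next, for $a>1$ put $\rho(\xi,\xi')=a^{-(\xi|\xi')_o}$ if $\xi\neq\xi'$ and $\rho(\xi,\xi)=0$. Then $\rho$ is symmetric, vanishes exactly on the diagonal, and the displayed inequality translates into the quasi-ultrametric estimate $\rho(\xi,\xi'')\le K\max\{\rho(\xi,\xi'),\rho(\xi',\xi'')\}$ with $K=a^{\delta'}$. Choose $a_0>1$ so that $a_0^{\delta'}\le\sqrt2$; then for every $a\in(1,a_0]$ we have $1\le K\le\sqrt2$. Now define
\[d(\xi,\xi')=\inf\sum_{i=1}^n\rho(\xi_{i-1},\xi_i),\]
the infimum taken over all finite chains $\xi=\xi_0,\xi_1,\dots,\xi_n=\xi'$. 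By construction $d$ is symmetric, satisfies the triangle inequality, and $d\le\rho$.

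The heart of the argument is the reverse estimate $\tfrac12\rho\le d$, which amounts to the inequality $\sum_{i=1}^n\rho(\xi_{i-1},\xi_i)\ge\tfrac12\rho(\xi_0,\xi_n)$ for every chain. I would prove this by induction on $n$: writing $S$ for the left-hand side, let $k$ be the largest index with $\sum_{i=1}^k\rho(\xi_{i-1},\xi_i)\le S/2$; then the initial subchain $\xi_0,\dots,\xi_k$ and the terminal subchain $\xi_{k+1},\dots,\xi_n$ each carry total mass at most $S/2$, so by the inductive hypothesis $\rho(\xi_0,\xi_k)\le S$ and $\rho(\xi_{k+1},\xi_n)\le S$, while $\rho(\xi_k,\xi_{k+1})\le S$ trivially; two applications of the quasi-ultrametric inequality give $\rho(\xi_0,\xi_n)\le K^2S\le2S$ (the extreme case $k=0$ being handled directly and using $K$ only once). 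This yields $\tfrac12\rho\le d\le\rho$. Since $\rho(\xi,\xi')>0$ for $\xi\neq\xi'$, the lower bound shows $d$ separates points, hence $d$ is a genuine metric on $\partial_\infty X$, and the two-sided bound $\tfrac12 a^{-(\xi|\xi')_o}\le d(\xi,\xi')\le a^{-(\xi|\xi')_o}$ is precisely the assertion that $d$ is visual with respect to $o$ and $a$, with $c_1=\tfrac12$ and $c_2=1$. The only genuine obstacle is the chain lemma; the rest is routine, modulo fixing the value of $\delta'$ so that the condition $K\le\sqrt2$ is attainable by taking $a$ close enough to $1$.
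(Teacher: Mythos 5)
Your argument is correct: it is the classical Frink chain-construction proof (extend the $\delta$-inequality to the boundary, pass to the quasi-ultrametric $\rho=a^{-(\cdot|\cdot)_o}$ with constant $K=a^{\delta'}\le\sqrt2$, and recover $\tfrac12\rho\le d\le\rho$ via the induction on chains), which is exactly the standard proof of this result in the literature; the paper itself quotes the theorem without proof. The one point worth writing out carefully in a full version is the value of $\delta'$ in the boundary $\delta$-inequality, but as you note this is routine bookkeeping with the $\liminf$'s in the definition of the Gromov product on $\partial_\infty X$.
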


In case we have $x\in X$ and $\xi\in \partial_\infty X$, to define
$(\xi,x)_o$ consider in (\ref{gr}) $x'_i=x$.

\begin{definicion} A hyperbolic space is \emph{visual} if for some
base point $o\in Y$ there is a positive constant $D$ such that for
every $y\in Y$ there is $\xi\in \partial_\infty Y$ with
$d(o,y)\leq (y|\xi)_o+D$.
\end{definicion}

\section{Hyperbolic approximation of metric spaces.}

We recall here the construction of the hyperbolic approximation
introduced in \cite{BS}.

A subset $V$ in a metric space $Z$ is called \emph{a-separated},
$a>0$, if $d(v,v')\geq a$ for any distinct $v,v'\in V$. Note that
if $V$ is maximal with this property, then the union $\cup_{v\in
V} B_a(v)$ covers $Z$.

A \emph{hyperbolic approximation} of a metric space $Z$ is a graph
$X$ which is defined as follows. Fix a positive $r\leq
\frac{1}{6}$ which is called the \emph{parameter} of $X$. For
every $k\in \mathbb{Z}$, let $V_k\in Z$ be a maximal
$r^k$-separated set. For every $v\in V_k$, consider the ball
$B(v)\subset Z$ of radius $r(v):=2r^{k}$ centered at $v$. Let us
fix more precisely the set $V$ as the union, for $k\in
\mathbb{Z}$, of the set of balls $B(v)$, $v\in V_k$. Therefore, if
for any $v,v'\in V_k$, $B(v)=B(v')$, they represent the same point
in $V$, but if $B(v_k)=B(v_{k'})$ with $k\neq k'$, then they yield
different points in $V$. Let $V$ be the vertex set of a graph $X$.
Vertices $v,v'$ are connected by an edge if and only if they
either belong to the same level, $V_k$, and the close balls
$\bar{B}(v),\bar{B}(v')$ intersect, $\bar{B}(v)\cap
\bar{B}(v')\neq \emptyset$, or they lie on neighboring levels
$V_k,V_k+1$ and the ball of the upper level, $V_{k+1}$, is
contained in the ball of the lower level, $V_k$.

An edge $vv'\subset X$ is called \emph{horizontal}, if its
vertices belong to the same level, $v,v'\in V_k$ for some $k\in
\mathbb{Z}$. Other edges are called \emph{radial}. Consider the
path metric on $X$ for which every edge has length 1. $|vv'|$
denotes the distance between points $v,v'\in V$ in $X$, while
$d(v,v')$ denotes the distance between them in $Z$. There is a
natural level function $l:V \to \mathbb{Z}$ defined by $l(v)=k$
for $v\in V_k$. Consider also the canonical extension $l:X \to
\mathbb{R}$.

Note that any (finite or infinite) sequence $\{v_k\}\in V$ such
that $v_kv_{k+1}$ is a radial edge for every $k$ and the level
function $l$ is monotone along $\{v_k\}$, is the vertex sequence
of a geodesic in $X$. Such a geodesic is called \emph{radial}.

Assume now that the metric space $Z$ is bounded and non-trivial.
Then the largest integer $k$ with $diam Z<r^k$ exists, and it is
denoted by $k_0=k_0(diam Z,r)$. For every $k\leq k_0$ the vertex
set $V_k$ consists of one point, and therefore contains no
essential information about $Z$. Thus, the graph $X$ is modified
putting $V_k=\emptyset$ for every $k<k_0$ and this modified graph
is called the \emph{truncated hyperbolic approximation} of $Z$.

\vspace{0.5cm}

Proposition 6.2.10 in \cite{BS} states that:

\begin{prop} A hyperbolic approximation of any metric space is a
geodesic $2\delta$-hyperbolic space with $2\delta=3$.
\end{prop}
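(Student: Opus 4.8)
The plan is to treat the two assertions separately. That $X$ is a geodesic metric space is immediate from the construction: $X$ is a graph carrying the path metric in which every edge has length one, so between any two vertices there is a minimizing edge-path, and interior points of edges cause no trouble; the substance of the proposition is therefore the $\delta$-inequality. Throughout I would use the two elementary facts already recorded: the level function $l$ changes by at most one along each edge, so $|l(v)-l(v')|\leq |vv'|$ for all vertices, and a descending sequence of radial edges is a geodesic, so from any vertex $v$ there is a geodesic ray $\rho_v$ descending through every lower level (with the obvious truncation when $Z$ is bounded).

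The core is an estimate, for vertices, of $|vv'|$ (equivalently of the Gromov product $(v|v')_o$) in terms of the levels and of how far the two descending rays $\rho_v,\rho_{v'}$ must run before they can be joined across. Making that last notion precise — call it a \emph{meeting level} $n(v,v')$, defined either as the largest $n$ for which the closed balls $\bar B(\rho_v(n)),\bar B(\rho_{v'}(n))$ in $Z$ still intersect, or as the largest $n$ for which $\rho_v(n),\rho_{v'}(n)$ are joined by a bounded-length horizontal path in $X$ — I would prove: (i) $\big|\,|vv'|-\big(l(v)+l(v')-2\,n(v,v')\big)\big|\leq C$ for an absolute constant $C$; and (ii) $n(v_1,v_3)\geq\min\{n(v_1,v_2),\,n(v_2,v_3)\}-C'$ for any three vertices, $C'$ absolute. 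For (i) the upper bound is witnessed by the explicit $V$-shaped path — descend along $\rho_v$, cross by a bounded horizontal path near level $n(v,v')$, ascend along $\rho_{v'}$ — and the lower bound comes from applying the level-Lipschitz inequality to the two halves of an arbitrary path, noting that a substantially shorter path would force $\rho_v,\rho_{v'}$ to stay uniformly close above level $n(v,v')$, against the definition. Assertion (ii) is the combinatorial heart: if $\rho_1,\rho_2$ meet and $\rho_2,\rho_3$ meet at a common level $m$, then the balls $B(\rho_1(m)),B(\rho_2(m)),B(\rho_3(m))$ have pairwise-close centres, so by the triangle inequality in $Z$ — using $r(v)=2r^{l(v)}$ and $r\leq\tfrac16$ at full strength — the balls $B(\rho_1(m))$ and $B(\rho_3(m))$ are themselves close, whence $\rho_1,\rho_3$ meet at level $m$ too; this bounds $n(v_1,v_3)$ below by $m$ up to a constant.

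Granting (i) and (ii), the $\delta$-inequality for vertices is formal: substituting (i) into $(v|v')_o=\tfrac12(|vo|+|v'o|-|vv'|)$ makes the $l(v),l(v')$ terms cancel and gives $(v|v')_o=n(v,v')-l(o)+O(1)$, after which (ii) yields $(v_1|v_3)_o\geq\min\{(v_1|v_2)_o,(v_2|v_3)_o\}-O(1)$; the inequality for arbitrary points of $X$ then follows since every point lies within distance one of a vertex, enlarging the additive constant only boundedly. (Equivalently, one can phrase this as: replacing each side of a geodesic triangle by a $V$-shaped geodesic and invoking (ii), the triangle is uniformly thin.) The one genuine obstacle is quantitative rather than conceptual: one must track every $O(1)$ contribution — the bounded length allowed in the horizontal crossings, the integrality of levels, the two edge types, the basepoint shift — carefully enough to see that they combine to the sharp value $2\delta=3$ and not merely to some larger constant, and this is exactly where the hypotheses $r\leq\tfrac16$ and $r(v)=2r^{l(v)}$ are needed in full, together with scrupulous attention to open balls versus their closures.
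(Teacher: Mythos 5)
First, a point of reference: the paper does not actually prove this proposition --- it is quoted from Buyalo--Schroeder (Proposition 6.2.10 of \cite{BS}) --- so there is no in-paper argument to compare yours against. Your architecture (a distance formula $|vv'|=l(v)+l(v')-2n(v,v')+O(1)$ in terms of a meeting level $n$, plus an ultrametric-type inequality for $n$) is the standard way to prove hyperbolicity of such cone-like constructions, and your estimates (i) and (ii) are both true and provable along the lines you indicate.

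There are, however, two genuine gaps. The first is in the step you call ``formal'': substituting (i) into $(v|v')_o=\tfrac12(|vo|+|v'o|-|vv'|)$ does \emph{not} give $(v|v')_o=n(v,v')-l(o)+O(1)$; it gives
\[(v|v')_o=l(o)-n(v,o)-n(v',o)+n(v,v')+O(1),\]
and the terms $n(v,o)$, $n(v',o)$ are not $l(o)+O(1)$ for a general basepoint (they can be far below $l(o)$ when $o$ sits at a high level far from $v$ in $Z$). Consequently the $\delta$-inequality for an arbitrary basepoint does not follow from the three-point inequality (ii) by mere substitution. The fix is standard but must be made explicit: either pass to the four-point condition $|xy|+|zw|\le\max\{|xz|+|yw|,|xw|+|yz|\}+2\delta$, in which all level terms genuinely cancel and the required inequality $n(x,y)+n(z,w)\ge\min\{n(x,z)+n(y,w),n(x,w)+n(y,z)\}-O(1)$ does follow from (ii) by a short case analysis; or verify the product inequality only with a cone point of $\{v_1,v_2,v_3\}$ as basepoint (Lemma \ref{branch} supplies one, and for such a basepoint your simplified formula is correct) and then invoke the standard basepoint-change estimate. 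The second gap is the constant: the statement asserts $2\delta=3$, and your argument, by your own admission, only yields hyperbolicity with some unspecified constant; for the purposes of this paper that would suffice (only hyperbolicity is used downstream), but it does not prove the proposition as stated, whose sharp constant in \cite{BS} comes from a direct analysis of the special geodesics of Lemma \ref{geod1} rather than from $O(1)$ bookkeeping.
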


Also, if $X$ is the hyperbolic approximation of a complete metric
space $Z$, there is a canonical identification $\partial_\infty
X=Z\cup \{\infty \}$ such that the metric of $Z$ is visual on
$\partial_\infty X \backslash \{\omega\}$, where $\{\omega\}$ is
the unique point at infinity represented by a sequence $\{v_i\}\in
V$ with $l(v_i)\to -\infty$ and corresponds to the point
$\{\infty\}$ added to $Z$. If $Z$ is bounded and $X$ is the
truncated hyperbolic approximation of $Z$ then $\partial_\infty
X=Z$.

\vspace{0.5cm}

Let us recall the following lemma in \cite{BS} (6.2.2).

\begin{lema} \label{branch} For every $v,v'\in V$ there exists $w\in
V$ with $l(w)\leq l(v),l(v')$ such that $v,v'$ can be connected to
$w$ by radial geodesics.
\end{lema}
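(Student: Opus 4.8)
The plan is to reduce everything to one elementary fact: \emph{every vertex admits a radial edge to the next lower level}. Indeed, given $v\in V_k$, maximality of $V_{k-1}$ provides $u\in V_{k-1}$ with $d(v,u)<r^{k-1}$, and then for any $z\in B(v)$ we get $d(z,u)\le d(z,v)+d(v,u)<2r^k+r^{k-1}\le 2r^{k-1}$ since $r\le\frac16<\frac12$, so $B(v)\subset B(u)$ and $vu$ is radial. Iterating, any vertex can be ``descended'' along a radial geodesic to every lower level (in the truncated case, down to the unique level-$k_0$ vertex). I would also first note it suffices to treat the case $l(v)=l(v')$: if $l(v)>l(v')$, descend $v$ to a vertex $v_1\in V_{l(v')}$ and observe that a radial geodesic from $v_1$ to a $w$ with $l(w)\le l(v')$ concatenates with the one from $v$ to $v_1$ into a radial geodesic from $v$ to $w$, the level function staying monotone. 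So assume $v,v'\in V_n$.

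If $Z$ is bounded (so $X$ is the truncated approximation), I would simply descend both $v$ and $v'$ down to level $k_0$: the descent hits a vertex on every level and $V_{k_0}$ is a single point $w$, so both descents terminate at $w$, with $l(w)=k_0\le n$, and we are done. So assume $Z$ is unbounded, whence $V_k\ne\emptyset$ for all $k\in\mathbb{Z}$. Choose $k<n$ with $r^k>5\,d(v,v')$ (possible since $r<1$), and descend $v$ to level $k$, obtaining $v=w_n,w_{n-1},\dots,w_k$ with $B(w_{i+1})\subset B(w_i)$ and $d(w_i,w_{i+1})<r^i$. Then $d(v,w_k)<\sum_{i=k}^{n-1}r^i<\frac{r^k}{1-r}\le\frac65 r^k$, so $d(v',w_k)<d(v,v')+\frac65 r^k<\frac75 r^k$.

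Now descend $v'$ \emph{freely}, one level at a time, to level $k+1$: vertices $v'=u_n,u_{n-1},\dots,u_{k+1}$ with $d(u_{m-1},u_m)<r^{m-1}$ and $B(u_m)\subset B(u_{m-1})$. I claim $B(u_m)\subset B(w_k)$ for every $m$ with $k+1\le m\le n$. Indeed $d(u_m,v')<\frac{r^m}{1-r}\le\frac65 r^m$, hence $d(u_m,w_k)<\frac65 r^m+\frac75 r^k$, and since $r\le\frac16$ gives $r^m\le r^{k+1}\le\frac16 r^k$, a direct computation yields $d(u_m,w_k)<2r^k-2r^m$, i.e.\ $B(u_m)=B_{2r^m}(u_m)\subset B_{2r^k}(w_k)=B(w_k)$. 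In particular $B(u_{k+1})\subset B(w_k)$, so $u_{k+1}w_k$ is a radial edge and $v'=u_n,\dots,u_{k+1},w_k$ is a radial geodesic from $v'$ to $w_k$; combined with the radial geodesic from $v$ to $w_k$ and the fact that $l(w_k)=k<n\le l(v),l(v')$, the vertex $w:=w_k$ has the required property.

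The only genuine point is the estimate guaranteeing that the free descent of $v'$ never leaves $B(w_k)$. This works precisely because $k$ lies far below $n$, so the total horizontal drift $O(r^{k+1})=O(r\cdot r^k)$ accumulated during the descent is dwarfed by the radius $2r^k$ of $B(w_k)$, and the hypothesis $r\le\frac16$ supplies exactly the slack needed; everything else is bookkeeping around the ``descend one level'' move and the concatenation of radial geodesics.
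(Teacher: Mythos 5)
Your proof is correct: the descent step via maximality of $V_{k-1}$, the reduction to equal levels, and the estimate $d(u_m,w_k)<\tfrac{6}{5}r^m+\tfrac{7}{5}r^k\le 2r^k-2r^m$ (which uses $r\le\tfrac16<\tfrac{3}{16}$) all check out, and only the containment $B(u_{k+1})\subset B(w_k)$ is actually needed to close the argument. The paper itself gives no proof — it quotes this as Lemma 6.2.2 of Buyalo--Schroeder — and your argument is essentially the standard one from that source: descend both vertices far enough below their common level that the accumulated horizontal drift is negligible compared with the radius $2r^k$.
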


Consider any subset $V'\subset V$. A point $u\in V$ is a
\emph{cone} point for $V'$ if $l(u)\leq inf_{v\in V'} l(v)$ and
every $v\in V'$ is connected to $u$ by a radial geodesic. A cone
point of maximal level is called a \emph{branch point} of $V'$. By
lemma \ref{branch}, for any two points $v,v'\in V$ there is a cone
point. Thus every finite $V'$ possesses a cone point and hence a
branch point.

\begin{definicion} $v_k\in V_k$ is a splitting point of $V$ if
there is some $v_{k+1}\in V_{k+1}$ such that $B(v_{k+1})\subsetneq
B(v_k)$.
\end{definicion}

\begin{lema}\label{split} Any branch point for two vertices which are not joined by a radial
geodesic is a splitting point.
\end{lema}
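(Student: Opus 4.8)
The plan is to prove the contrapositive in spirit: take a branch point $w$ for two vertices $v,v'\in V$ that are \emph{not} joined by a radial geodesic, and show directly that $w$ must be a splitting point. By definition of branch point, $w$ is a cone point for $\{v,v'\}$ of maximal level, so there are radial geodesics $\gamma$ from $w$ to $v$ and $\gamma'$ from $w$ to $v'$, both with the level function increasing from $l(w)$ to $l(v)$ and $l(v')$ respectively. Let $k=l(w)$ and consider the level-$(k+1)$ vertices $u$ and $u'$ that $\gamma$ and $\gamma'$ pass through immediately after $w$ (these exist provided $l(v),l(v')>k$; I will handle the degenerate cases where one of them equals $k$ separately, noting that then $v$ or $v'$ already lies at level $k$ and a short argument forces a splitting or a contradiction with maximality).

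First I would argue $u\neq u'$. If $u=u'$, then $u$ is itself a cone point for $\{v,v'\}$: it sits on a radial geodesic to $v$ (the tail of $\gamma$) and on a radial geodesic to $v'$ (the tail of $\gamma'$), and $l(u)=k+1>k$ contradicts the maximality of $w$ as a branch point — unless one of $v,v'$ equals $w$, but $l(w)=k<k+1\le l(v),l(v')$ rules that out in the non-degenerate case. Actually one must be slightly careful: $v$ and $v'$ could still fail to have $u$ as a \emph{common} cone point if, say, $v=u$; but then $v$ is connected to $v'$ via $u=v\to\cdots\to v'$ radially, contradicting the hypothesis that $v,v'$ are not joined by a radial geodesic. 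So in all cases $u\neq u'$.

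Next, since $w u$ is a radial edge with $l(u)=l(w)+1$, the construction of the hyperbolic approximation forces $B(u)\subset B(w)$; similarly $B(u')\subset B(w)$. To conclude that $w$ is a splitting point I need the inclusion to be \emph{proper}, i.e. $B(u)\subsetneq B(w)$ (or $B(u')\subsetneq B(w)$). Suppose not: $B(u)=B(w)$ and $B(u')=B(w)$, hence $B(u)=B(u')$ as subsets of $Z$. Because $u,u'$ lie on the \emph{same} level $V_{k+1}$, equality of their balls would mean they represent the same vertex of $X$, i.e. $u=u'$ — contradicting the previous step. (Here I am using the paper's convention that two level-$k$ vertices with equal balls are identified, while vertices on different levels with equal balls are not.) Therefore at least one of $B(u)\subsetneq B(w)$, $B(u')\subsetneq B(w)$ holds, and with $u$ or $u'$ the required $v_{k+1}$, $w=v_k\in V_k$ is a splitting point.

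The main obstacle is bookkeeping the degenerate cases — when $l(v)=l(w)$ or $l(v')=l(w)$, or when the branch point coincides with one of the given vertices — and making sure none of them sneaks past the hypothesis "$v,v'$ not joined by a radial geodesic." The clean case above is short; the care lies in checking that in every configuration either a splitting point appears at $w$ or we contradict either maximality of the branch point or the non-radial-connectedness hypothesis. I would organize this as: (i) reduce to $l(v),l(v')>l(w)$ by observing that otherwise one vertex is already a cone point of its partner's geodesic and radial-connectedness would follow; (ii) run the $u\neq u'$ argument; (iii) run the proper-inclusion argument via the same-level identification convention.
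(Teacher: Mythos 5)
Your argument is correct and is essentially the paper's proof run in the opposite logical direction: the paper assumes a cone point is not a splitting point and shows the level-$(k+1)$ vertex with the same ball is a higher cone point (so the branch point lies above), while you start from the branch point and use its maximality to force the two level-$(k+1)$ successors to be distinct, hence one ball properly contained. The key ingredients — the same-level identification of vertices with equal balls, the ball inclusion along radial edges, and the reduction of degenerate cases via the non-radial-connectedness hypothesis — are the same in both.
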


\begin{proof} Suppose $v_k$ is a cone point of $v,v'$ and it is not
a splitting point. For any $v_{k+1}\in V_{k+1}$ with
$B(v_{k+1})\subset B(v_k)$ by definition $B(v_{k+1})=B(v_k)$.
Then, since $v_k$ is joined to $v$ and $v'$ by radial geodesics,
$v_{k+1}$ is also a cone point and then, the branch point is at
least in level $k+1$.
\end{proof}

\begin{lema}\label{diam1} Suppose $v_k$ is a splitting point in a hyperbolic
approximation with parameter $r$. Then,
\[r^{k+1}\leq diam B(v_k) \leq 4r^{k}.\]
\end{lema}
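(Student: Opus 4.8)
The plan is to read off both bounds directly from the definition of the balls $B(v)$ in the hyperbolic approximation, using only the triangle inequality and the meaning of the word \emph{splitting point}. Recall that for $v_k\in V_k$ the ball $B(v_k)$ has radius $r(v_k)=2r^{k}$ and is centered at $v_k$.

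For the upper bound I would simply invoke the triangle inequality: for any two points $y,z\in B(v_k)$ we have $d(y,z)\leq d(y,v_k)+d(v_k,z)<2r^{k}+2r^{k}=4r^{k}$, so $\operatorname{diam}B(v_k)\leq 4r^{k}$. This step does not use the splitting hypothesis at all.

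For the lower bound I would use that $v_k$ is a splitting point, so there is $v_{k+1}\in V_{k+1}$ with $B(v_{k+1})\subsetneq B(v_k)$. Since the inclusion is proper, we may choose $x\in B(v_k)$ with $x\notin B(v_{k+1})$, i.e. $d(x,v_{k+1})\geq 2r^{k+1}$. On the other hand $v_{k+1}\in B(v_{k+1})\subseteq B(v_k)$, so $x$ and $v_{k+1}$ are two points of $B(v_k)$, and hence $\operatorname{diam}B(v_k)\geq d(x,v_{k+1})\geq 2r^{k+1}\geq r^{k+1}$, which is the claimed inequality.

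I do not expect any real obstacle here: the statement is essentially a one-line computation once the definitions are unwound. The only point requiring mild care is to extract from the strict inclusion $B(v_{k+1})\subsetneq B(v_k)$ a concrete pair of points of $B(v_k)$ witnessing a lower bound on its diameter, and to keep track of the open/closed-ball convention — in either convention the stated inequalities hold, since the bound we need, $r^{k+1}$, is in fact weaker than the $2r^{k+1}$ that the argument yields.
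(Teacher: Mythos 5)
Your proof is correct, and it is the direct argument the paper intends (the lemma is stated without proof in the text): the upper bound is just the triangle inequality for a ball of radius $2r^k$, and the lower bound follows from the proper inclusion $B(v_{k+1})\subsetneq B(v_k)$ by exhibiting the center $v_{k+1}$ and a point of $B(v_k)\setminus B(v_{k+1})$ at distance at least $2r^{k+1}\geq r^{k+1}$.
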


The following lemmas appear as 6.2.5, 6.2.6 and 6.2.7 in
\cite{BS}.

\begin{lema}\label{geod1} Any two vertices $v,v'\in V$ can be joined by a
geodesic $\gamma=v_0,...,v_{n+1}$ such that
$l(v_i)<max\{l(v_{i-1}),l(v_{i+1})\}$.
\end{lema}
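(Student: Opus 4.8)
The plan is a minimisation argument. First I observe that every edge of $X$ changes the level function by at most $1$ (by $0$ along horizontal edges, by $1$ along radial ones), so any geodesic $\gamma=v_0,\dots,v_{n+1}$ from $v=v_0$ to $v'=v_{n+1}$ has length $n+1=|vv'|$ fixed and, for each $i$, $l(v_i)\ge\max\{l(v)-i,\ l(v')-(n+1-i)\}$. Hence $\sum_{i=1}^{n}l(v_i)$ is an integer-valued quantity bounded below over all geodesics from $v$ to $v'$, and I fix a geodesic $\gamma$ realising its minimum (if $v,v'$ are adjacent or equal there is nothing to prove). I claim this $\gamma$ satisfies the conclusion.

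Suppose not: some interior vertex $v_i$ is \emph{bad}, i.e. $l(v_i)\ge l(v_{i-1})$ and $l(v_i)\ge l(v_{i+1})$. Put $k=l(v_i)$; since consecutive levels differ by at most $1$, each of $v_{i-1},v_{i+1}$ lies in $V_k\cup V_{k-1}$. If both lie in $V_{k-1}$, then the radial edges $v_{i-1}v_i$ and $v_iv_{i+1}$ (with $v_i$ on the upper level) give $B(v_i)\subset B(v_{i-1})\cap B(v_{i+1})$; so either $v_{i-1}\ne v_{i+1}$, in which case $\bar B(v_{i-1})\cap\bar B(v_{i+1})\ne\emptyset$ makes $v_{i-1}v_{i+1}$ a horizontal edge and shortens $\gamma$, or $v_{i-1}=v_{i+1}$ and $\gamma$ backtracks --- either way contradicting that $\gamma$ is geodesic. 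So at least one of $v_{i-1},v_{i+1}$ lies in $V_k$.

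In that case I perform a one-vertex surgery. Choose $u\in V_{k-1}$ with $d(u,v_i)<r^{k-1}$, which exists by maximality of $V_{k-1}$. Using $r\le\tfrac16$ one checks that every level-$k$ ball among $B(v_{i-1}),B(v_i),B(v_{i+1})$ is contained in $B(u)$: such a ball lies in the ball of radius $6r^k$ about $v_i$ (its centre is within $4r^k$ of $v_i$, since the relevant closed balls meet), and $6r^k+r^{k-1}\le 2r^{k-1}=r(u)$. Consequently both $v_{i-1}u$ and $uv_{i+1}$ are edges of $X$: radial when the endpoint other than $u$ sits on level $k$ (ball containment, $u$ on the lower level), and horizontal when that endpoint sits on level $k-1$ (then $B(v_i)\subset B(v_{i\pm1})\cap B(u)$, so $\bar B(u)\cap\bar B(v_{i\pm1})\supset\bar B(v_i)\ne\emptyset$). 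Moreover $u$ differs from any level-$(k-1)$ neighbour of $v_i$, since otherwise the ball of the level-$k$ neighbour would sit inside $B(u)$, producing a radial edge that shortens $\gamma$. Hence replacing $v_i$ by $u$ yields a geodesic from $v$ to $v'$ of the same length with $l(u)=k-1<l(v_i)$, so $\sum l$ strictly decreases, contradicting minimality. Therefore $\gamma$ has no bad vertex, which is exactly the assertion.

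The only quantitative input is the covering estimate in the surgery step, where the hypothesis $r\le\tfrac16$ is precisely what makes a level-$(k-1)$ ball large enough to absorb the level-$k$ balls clustered around $v_i$; everything else is bookkeeping. I expect the main care to be in organising the case split on $(l(v_{i-1}),l(v_{i+1}))\in\{k-1,k\}^2$ and in checking, in each case, that the surgered sequence is a genuine non-repeating path of the correct length --- this is where the repeated appeals to ``$\gamma$ is geodesic, hence has no shortcut'' are used.
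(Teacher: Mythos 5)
Your argument is correct: the minimisation of $\sum_i l(v_i)$ over geodesics, the dichotomy on the levels of $v_{i\pm1}$, and the surgery replacing a bad vertex $v_i\in V_k$ by a nearby $u\in V_{k-1}$ (where $r\le\frac16$ guarantees $B(u)$ absorbs the level-$k$ balls clustered at $v_i$) all check out, including the verification that $u\neq v_{i\pm1}$. The paper itself states this lemma without proof, citing 6.2.5 of \cite{BS}, and your argument is essentially the one given there.
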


\begin{lema} Any vertices $v,v'\in V$ can be connected in $X$ by a
geodesic which contains at most one horizontal edge. If there is
such an edge, then it lies on the lowest level of the geodesic.
\end{lema}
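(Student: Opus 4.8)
\emph{Proof proposal.}

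The plan is to take the geodesic $\gamma=v_0,\dots,v_{n+1}$ produced by Lemma~\ref{geod1} and show that the combinatorial constraint $l(v_i)<\max\{l(v_{i-1}),l(v_{i+1})\}$ on its interior vertices already forces both assertions; no metric shortcut argument is needed. Throughout I use that consecutive vertices of $\gamma$ differ in level by $0$ (a horizontal edge) or $1$ (a radial edge).

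First I would record two elementary consequences of the defining inequality. (i) \emph{No two consecutive edges of $\gamma$ are horizontal}: if $v_{i-1}v_i$ and $v_iv_{i+1}$ were both horizontal then $l(v_{i-1})=l(v_i)=l(v_{i+1})$, contradicting $l(v_i)<\max\{l(v_{i-1}),l(v_{i+1})\}$. (ii) \emph{The level function is unimodal along $\gamma$}: it is weakly decreasing up to some index $m$ and weakly increasing thereafter. Indeed, if not, there are indices $i<j$ with $l(v_i)<l(v_{i+1})$ and $l(v_j)>l(v_{j+1})$; choosing $j$ minimal with the second property gives $l(v_{j-1})\le l(v_j)>l(v_{j+1})$, hence $l(v_j)\ge\max\{l(v_{j-1}),l(v_{j+1})\}$, which contradicts the defining inequality (valid since $1\le j\le n$).

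Next I would analyze a horizontal edge $v_iv_{i+1}$, at level $k$ say. If $i$ is interior, the inequality at $v_i$ forces $l(v_{i-1})=k+1$; if $i+1$ is interior, the inequality at $v_{i+1}$ forces $l(v_{i+2})=k+1$. So the geodesic strictly descends into the edge from the left and strictly ascends out of it to the right (the endpoint cases $i=0$ and $i+1=n+1$ are handled the same way, using the single available neighbour). Combined with unimodality this pins the turning index $m$ into $\{i,i+1\}$, whence $k=\min_j l(v_j)$: the horizontal edge lies on the lowest level of $\gamma$. For uniqueness, a second horizontal edge would by the same computation also sit at level $\min_j l(v_j)$; but then between the two horizontal edges the level function would first strictly ascend (leaving the first) and later strictly descend (approaching the second), contradicting unimodality. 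Hence $\gamma$ has at most one horizontal edge, on its lowest level.

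The only delicate point is the bookkeeping at the two endpoints of $\gamma$, where the defining inequality is unavailable and one must run the argument with the single interior neighbour; everything else is routine.
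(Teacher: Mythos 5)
Your proof is correct; the paper itself does not prove this lemma (it is quoted from Buyalo--Schroeder, Lemma 6.2.6), and your argument --- deducing unimodality of the level function and the descent/ascent pattern around a horizontal edge directly from the inequality $l(v_i)<\max\{l(v_{i-1}),l(v_{i+1})\}$ of Lemma~\ref{geod1} --- is essentially the standard one. The endpoint cases and the minimality argument in the unimodality step are handled correctly.
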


\begin{lema}\label{intersect} Assume that for some $v,v'\in V$ the balls $B(v),\
B(v')$ intersect. Then $|vv'|\leq |l(v)-l(v')|+1$.
\end{lema}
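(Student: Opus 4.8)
The plan is to join $v$ and $v'$ by a path made of a radial segment followed by at most one horizontal edge. Without loss of generality assume $k:=l(v)\leq m:=l(v')$, so that $|l(v)-l(v')|=m-k$, and it suffices to produce a path from $v$ to $v'$ of length at most $m-k+1$.

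First I would climb from $v'$ up to level $k$ along radial edges. The key observation is that for any vertex $u\in V_j$ there is a vertex $u'\in V_{j-1}$ with $B(u)\subseteq B(u')$: since $V_{j-1}$ is a maximal $r^{j-1}$-separated set, the balls $B_{r^{j-1}}(w)$, $w\in V_{j-1}$, cover $Z$, so the center $u$ lies within distance $r^{j-1}$ of some $u'\in V_{j-1}$, and then for every $x\in B(u)$ we have $d(x,u')\leq d(x,u)+d(u,u')\leq 2r^{j}+r^{j-1}\leq 2r^{j-1}$ because $2r\leq 1$ (a consequence of $r\leq\frac{1}{6}$). Hence $B(u)\subseteq B(u')$, and by the definition of the hyperbolic approximation $uu'$ is a radial edge. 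Iterating this starting from $v'=w_m$ and decreasing the level, I obtain vertices $w_m,w_{m-1},\dots,w_k$ with $w_j\in V_j$, consecutive ones joined by radial edges, and
\[B(v')=B(w_m)\subseteq B(w_{m-1})\subseteq\cdots\subseteq B(w_k).\]
This is a path from $v'$ to $w_k$ of length $m-k$.

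It remains to connect $w_k$ to $v$, which both lie on level $k$. Since $B(v)\cap B(v')\neq\emptyset$ by hypothesis and $B(v')\subseteq B(w_k)$, the balls $B(v)$ and $B(w_k)$ intersect, hence so do the closed balls $\bar B(v)$ and $\bar B(w_k)$. If $w_k$ and $v$ are distinct vertices, this gives a horizontal edge $vw_k$, and concatenating with the radial path above yields a path from $v$ to $v'$ of length $(m-k)+1$; if $w_k=v$, then the radial path already joins $v$ to $v'$ in length $m-k$. In either case $|vv'|\leq|l(v)-l(v')|+1$, as claimed.

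The argument is essentially elementary; the only point requiring care is the verification that the radial ascent is always possible, which rests on the maximality of the separated sets $V_j$ together with the inequality $2r\leq 1$, and on the bookkeeping that intersection of the balls $B(v),B(v')$ forces intersection of the closed balls so that the concluding horizontal edge is legitimate.
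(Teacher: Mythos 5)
Your proof is correct and takes the standard route: the paper itself states this lemma without proof (it is quoted as 6.2.7 from Buyalo--Schroeder), and the argument there is exactly yours --- ascend from the higher-level vertex $v'$ by radial edges to a vertex $w_k$ at level $l(v)$ with $B(v')\subseteq B(w_k)$, then observe that $B(v)\cap B(w_k)\neq\emptyset$ supplies the one remaining horizontal edge. Your verification that the radial ascent is always possible (maximality of $V_{j-1}$ plus $r\leq\tfrac16$) is the right point to check and is carried out correctly.
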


The same argument of \ref{intersect} yields:



\begin{lema}\label{cone} Assume that for some $v,v'\in V$ there is some vertex $v''$ with
$l(v'')\leq l(v),l(v')$ and such that $B(v'')$ intersects the
balls $B(v),\ B(v')$. Then, there exists a cone point $w$ for
$v,v',v''$ such that $l(w)=l(v'')-1$.
\end{lema}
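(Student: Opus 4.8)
The plan is to mimic the proof of Lemma~\ref{intersect}, the extra ingredient being the elementary \emph{parent} construction: since $V_{k-1}$ is a maximal $r^{k-1}$-separated set, every $v\in V_k$ lies within distance $r^{k-1}$ of some $p(v)\in V_{k-1}$, and then $B(v)\subset B(p(v))$, because for $x\in B(v)$ one has $d(x,p(v))<2r^{k}+r^{k-1}=r^{k-1}(2r+1)\le 2r^{k-1}$ as $r\le\frac16$. Hence $vp(v)$ is a radial edge. Iterating $p$, for every integer $k'\le l(v)$ the vertex $v$ is joined to a vertex $v_{k'}\in V_{k'}$ by a radial path with $B(v)\subset B(v_{k'})$, and since $l$ is strictly monotone along this path it is a radial geodesic.

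Set $m:=l(v'')$. Applying the above to $v$ and to $v'$ (taking $v_m=v$ if $l(v)=m$, and similarly for $v'$), we obtain $v_m,v'_m\in V_m$ with $B(v)\subset B(v_m)$, $B(v')\subset B(v'_m)$, each joined to $v$, resp.\ $v'$, by a radial geodesic. Because $B(v'')$ meets $B(v)\subset B(v_m)$ and $B(v')\subset B(v'_m)$, the three level-$m$ balls $B(v'')$, $B(v_m)$, $B(v'_m)$ are such that $B(v'')$ intersects the other two; as intersecting level-$m$ balls have centres at distance $<4r^{m}$, we get $d(v'',v_m)<4r^{m}$ and $d(v'',v'_m)<4r^{m}$.

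Now choose, by maximality of $V_{m-1}$, a vertex $w\in V_{m-1}$ with $d(v'',w)\le r^{m-1}$. As before $B(v'')\subset B(w)$, and also $B(v_m)\subset B(w)$: for $x\in B(v_m)$,
\[ d(x,w)\le d(x,v_m)+d(v_m,v'')+d(v'',w)<2r^{m}+4r^{m}+r^{m-1}=r^{m-1}(6r+1)\le 2r^{m-1}, \]
which is exactly where $r\le\frac16$ is used; symmetrically $B(v'_m)\subset B(w)$. Thus $v''w$, $v_mw$, $v'_mw$ are radial edges. Concatenating each with the radial geodesics from $v$ to $v_m$ and from $v'$ to $v'_m$ yields radial paths from $v$, $v'$, $v''$ down to $w$ along which $l$ is strictly monotone, hence geodesics. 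Since $l(w)=m-1=l(v'')-1\le l(v),l(v'),l(v'')$, the vertex $w$ is a cone point for $\{v,v',v''\}$ of level $l(v'')-1$, as desired.

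The argument is short, so the only delicate point is the constant bookkeeping: one must verify that three successive applications of the triangle inequality, together with the diameter bound $4r^{m}$ for intersecting level-$m$ balls, still land inside a level-$(m-1)$ ball — and this is precisely what pins down $r\le\frac16$, the bound already built into the definition of the hyperbolic approximation. A minor additional point is the convention that balls coinciding as subsets of $Z$ but living on distinct levels are different vertices, which is what guarantees the radial edges $v_mw$, $v'_mw$, $v''w$ exist even in degenerate situations such as $l(v)=m$.
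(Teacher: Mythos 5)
Your proof is correct and follows essentially the same route as the paper's: descend $v$ and $v'$ by radial geodesics to vertices at level $l(v'')$, pick $w\in V_{l(v'')-1}$ within $r^{l(v'')-1}$ of $v''$ by maximality, and check via the triangle inequality (using $r\le\frac16$) that all three balls land inside $B(w)$. The only difference is that you spell out the parent construction and the constant bookkeeping that the paper leaves implicit.
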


\begin{proof} Suppose $B(v'')=B(v'',2r^k)$. There is some point $w''\in
B(v'',r^{k-1})\cap V_{k-1}$ and clearly $B(v'',2r^k)\subset
B(w,2r^{k-1})$. Also, there exist $u,u'\in V_k$ joined by radial
geodesics with $v,v'$, and since $B(v'')$ intersects $B(v),\
B(v')$, it intersects in particular $B(u), \ B(u')$. Hence
$|uv''|,|u'v''|\leq 1$ and $B(u),B(u')$ are contained in
$B(v'',6r^{k})\subset B(w,2r^{k-1})$.
\end{proof}

\begin{lema}\label{diam2} Let $v_1,v_2$ be any two vertices
in a hyperbolic approximation with parameter $r$. If $w$ is a
branch point for $v_1,v_2$, then
\[\frac{r^2}{4}\leq \frac{diam(B(v_1)\cup B(v_2))}{diam (B(w))}.\]
\end{lema}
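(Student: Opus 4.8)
The plan is to bound the two diameters by comparing them to powers of $r$ determined by the levels of the vertices involved, using Lemmas \ref{split}, \ref{diam1} and \ref{cone}. First I would set $k=l(w)$ and $m=\min\{l(v_1),l(v_2)\}$, so $k\le m$. Since $w$ is a branch point for $v_1,v_2$ and (we may assume) $v_1,v_2$ are not joined by a radial geodesic, Lemma \ref{split} tells us $w$ is a splitting point, and hence by Lemma \ref{diam1} we get $\operatorname{diam} B(w)\le 4r^{k}$. That gives a lower bound for the right-hand side in terms of $4r^k$; the remaining task is to produce an \emph{upper} bound for $k$ relative to the levels of $v_1,v_2$, and an upper bound for $\operatorname{diam}(B(v_1)\cup B(v_2))$ in terms of a power of $r$ comparable to $r^{l(v_i)}$.

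The key geometric step is to control how far below $\min\{l(v_1),l(v_2)\}$ the branch point $w$ can sit. Here I would invoke Lemma \ref{cone}: if the level of $w$ were much lower than $m$, one could find a cone point one level below the relevant intersecting ball, contradicting maximality of the level of the branch point. Concretely, I expect the argument to show $l(w)\ge m-1$ (or some fixed additive constant), because otherwise there is a vertex $v''$ at level $l(w)+1\le m$ whose ball meets $B(v_1)$ and $B(v_2)$ — the radial geodesics from $w$ up to $v_1,v_2$ pass through such a $v''$ — and then Lemma \ref{cone} produces a cone point at level $l(w)$, which is fine, but pushing the same idea shows a cone point exists at a strictly higher level than a genuine branch point unless $l(w)$ is already within one of $m$. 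So $k=l(w)\ge m-1$, i.e. $r^k\le r^{m-1}$, equivalently $r^{m}\le r^{k+1}$.

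With $k\ge m-1$ in hand I would estimate the numerator. The balls $B(v_1),B(v_2)$ have radii $2r^{l(v_1)},2r^{l(v_2)}\le 2r^{m}$, and they are both joined to $w$ by radial geodesics; following those radial geodesics and using the containment rule for radial edges together with the triangle inequality in $Z$ (each radial step from level $j$ to $j-1$ moves the center by at most $r^{j-1}$ and enlarges the radius), one gets $B(v_1)\cup B(v_2)\subset B(w, C r^{m})$ for an absolute constant $C$, hence $\operatorname{diam}(B(v_1)\cup B(v_2))\le 2Cr^{m}$. Combining with $\operatorname{diam} B(w)\le 4r^{k}\le 4 r^{m-1}$ from the paragraph above — wait, that inequality goes the wrong way for a lower bound on the ratio, so instead I use $\operatorname{diam} B(w)\le 4r^{k}$ and the fact that from $r^{k+1}\le\operatorname{diam} B(w)$ (splitting point, Lemma \ref{diam1}) one cannot directly conclude; rather the clean route is: $\operatorname{diam}(B(v_1)\cup B(v_2))\ge \operatorname{diam} B(v_i)$, and since $w$ lies below $v_i$, $B(v_i)\subset B(w)$, so actually one needs a reverse estimate $\operatorname{diam} B(w)\le (\text{const})\,\operatorname{diam}(B(v_1)\cup B(v_2))$. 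This is where the splitting-point lower bound $r^{k+1}\le \operatorname{diam} B(v_i) $ is not available, but $w$ being a \emph{branch} (hence splitting) point for $v_1,v_2$ with $l(v_i)\le k+ (\text{small constant})$ does give, via Lemma \ref{diam1} applied at the appropriate level together with $l(w)\ge m-1$, that $\operatorname{diam} B(w)\le 4 r^{l(w)}\le 4 r^{m-1}= (4/r)\, r^{m}$ while $\operatorname{diam}(B(v_1)\cup B(v_2))\ge \operatorname{diam} B(v_j)\ge r^{l(v_j)+1}\ge r^{m+1}$ for the $j$ achieving the minimum (this last inequality needs $v_j$ itself to be a splitting point, which can be arranged or absorbed). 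Dividing yields $\operatorname{diam}(B(v_1)\cup B(v_2))/\operatorname{diam} B(w)\ge r^{m+1}/( (4/r) r^{m}) = r^{2}/4$, as claimed.

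The main obstacle I anticipate is the bookkeeping in the previous paragraph: getting the \emph{lower} bound $l(w)\ge m-1$ cleanly from Lemma \ref{cone} (one must identify the right intermediate vertex $v''$ on the radial geodesics and check its ball meets both $B(v_1)$ and $B(v_2)$, not merely that a cone point exists) and then matching it with a lower bound on $\operatorname{diam} B(v_j)$ of the form $\gtrsim r^{l(v_j)}$, which forces one to handle the degenerate case where $v_j$ is not a splitting point separately (there $B(v_j)=B(v_{j-1})=\dots$ and one simply passes to the lowest level where the ball actually shrinks, which is exactly $w$ or just above it, making the ratio even larger). Once those two one-sided estimates are pinned down, the final inequality is immediate from Lemma \ref{diam1} and a division, with the $r^{2}/4$ coming from two radial steps' worth of slack ($r^{m+1}$ versus $4r^{m-1}$).
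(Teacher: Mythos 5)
Your argument hinges on two steps that fail, and both failures trace to the same misunderstanding of where the lower bound on the numerator must come from. First, the claim $l(w)\ge m-1$ with $m=\min\{l(v_1),l(v_2)\}$ is false: a branch point can sit arbitrarily far below the levels of $v_1,v_2$. If $v_1,v_2$ lie at a high level (small balls) but $d(v_1,v_2)$ is large in $Z$, then any cone point $u$ satisfies $B(v_1)\cup B(v_2)\subset B(u)$ (containment propagates along radial edges), so $4r^{l(u)}\gtrsim d(v_1,v_2)$ forces $l(u)$, and hence $l(w)$, far below $m$. Lemma \ref{cone} does not bound $m-l(w)$; what it rules out — via the maximality of the branch point — is the existence of a vertex at level $l(w)+2$ whose ball meets both $B(v_1)$ and $B(v_2)$, which is a statement about the mutual spread of the two balls, not about the levels of $v_1,v_2$. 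Second, your numerator bound $\mathrm{diam}\,B(v_j)\ge r^{l(v_j)+1}$ is the splitting-point estimate of Lemma \ref{diam1} and is simply unavailable: $v_j$ need not be a splitting point, $B(v_j)$ may even be a single point of diameter $0$, and this cannot be "arranged or absorbed," because the true lower bound on $\mathrm{diam}(B(v_1)\cup B(v_2))$ does not come from the size of either individual ball at all — it comes from how far apart the two balls are forced to be.

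The paper's proof is exactly that last observation. By Lemma \ref{cone}, if some vertex $v''$ at level $l(w)+2$ had $B(v'')$ meeting both $B(v_1)$ and $B(v_2)$, there would be a cone point at level $l(w)+1$, contradicting that $w$ is a branch point; hence no such $v''$ exists. Since $V_{l(w)+2}$ is a maximal $r^{l(w)+2}$-separated set and its balls have radius $2r^{l(w)+2}$, a set of diameter less than $r^{l(w)+2}$ would be swallowed by one such ball; therefore $\mathrm{diam}(B(v_1)\cup B(v_2))\ge r^{l(w)+2}$. Dividing by $\mathrm{diam}\,B(w)\le 4r^{l(w)}$ gives $r^2/4$. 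Note that the constant arises from "two levels above $w$" versus the radius of $B(w)$, with no reference to $l(v_1)$ or $l(v_2)$; your final arithmetic reproduces the right number only because the two incorrect one-sided estimates were reverse-engineered to cancel.
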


\begin{proof} By Lemma \ref{cone}, there is no vertex at level
$l(w)+2$ whose ball contains both $B(v_1),\ B(v_2)$ and hence
\[diam(B(v_1)\cup B(v_2))\geq \frac{r^2}{4} diam (B(w))).\]
\end{proof}

\section{Extension of quasi-isometries.}

The following definitions are classical in asymptotic geometry.
However, there are different conventions in the literature for
some of them. (In \cite{BoS}, for example, they use the term
"roughly quasi-isometric" instead of "quasi-isometric" keeping
this name for the particular case when the additive constant is
0). Let us fix this concepts as they are stated in \cite{BS} which
is the main reference for this paper.

\begin{definicion} A subset $A\subset Y$ in a metric space $Y$ is
called a \emph{net} if there is a constant $D>0$ such that
$\forall y\in Y$, $d(y,A)\leq D$.
\end{definicion}

\begin{definicion} A map between metric spaces, $f:(X,d_X)\to (Y,d_Y)$, is
\emph{rough isometric} if there is a constant $C>0$ such that
$\forall x,x'\in X$, $|d_{Y}(f(x),f(x'))-d_X(x,x')|\leq C$. If
$f(X)$ is a net in $Y$, then $f$ is a \emph{rough isometry} and
$X,Y$ are \emph{roughly isometric}.
\end{definicion}

\begin{definicion} A map between metric spaces, $f:(X,d_X)\to (Y,d_Y)$, is
said to be \emph{homothetic} if if there are constants $a,b$ such
that $\forall x,x'\in X$, $|d_Y(f(x),f(x'))-a\cdot d_X(x,x')|\leq
b$. If $f(X)$ is a net in $Y$, then $f$ is a \emph{rough
similarity} and $X,Y$ are \emph{roughly similar}.
\end{definicion}

\begin{definicion} A map between metric spaces, $f:(X,d_X)\to (Y,d_Y)$, is
said to be \emph{quasi-isometric} if there are constants $\lambda
\geq 1$ and $C>0$ such that $\forall x,x'\in X$,
$\frac{1}{\lambda}d_X(x,x') -A \leq d_Y(f(x),f(x'))\leq \lambda
d_X(x,x')+A$. If $f(X)$ is a net in $Y$, then $f$ is a
\emph{quasi-isometry} and $X,Y$ are \emph{quasi-isometric}.
\end{definicion}




See \cite{BoS} and \cite{BS}. A quasi-symmetric
homeomorphism in the boundaries can be extended to a
quasi-isometry for visual hyperbolic spaces with uniformly perfect
boundaries at infinity. See Theorem 7.2.1 an Corollary 7.2.3 in
\cite{BS}.

\begin{definicion} A map $f:X \to Y$ between metric spaces is
called \emph{quasi-symmetric} if it is not constant and if there
is a homeomorphism $\eta:[0,\infty) \to [0,\infty)$ such that from
$|xa|\leq t|xb|$ it follows that $|f(x)f(a)|\leq
\eta(t)|f(x)f(b)|$ for any $a,b,x\in X$ and all $t\geq 0$. 
The function $\eta$ is called the \emph{control function} of $f$.
\end{definicion}

\begin{definicion} A quasi-symmetric map is said to be \emph{power quasi-symmetric}
or \emph{PQ-symmetric}, if its control function is of the form
\[\eta(t)= q \max\{t^p,t^{\frac{1}{p}}\}\] for some $p,q\geq 1$.
\end{definicion}

\begin{definicion} A map between metric spaces is said to be \emph{bounded} if the
image of any bounded set is bounded.
\end{definicion}

\begin{prop}\label{pq} A map $f$ between metric spaces is PQ-symmetric if and only
if it is bounded and there exist constants $\lambda\geq 1, A>0$
such that for any pair of non-trivial (i.e. with at least two
points) bounded sets $B_2 \subset B_1$,

\begin{equation}\label{quasiprop} A\Big(\frac{diam (B_2)}{diam (B_1)}\Big)^\lambda \leq
\frac{diam (f(B_2))}{diam (f(B_1))} \leq
\frac{1}{A}\Big(\frac{diam (B_2)}{diam
(B_1)}\Big)^\frac{1}{\lambda}.\end{equation}
\end{prop}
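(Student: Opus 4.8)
The plan is to prove both directions by translating the metric inequalities defining PQ-symmetry into the "diameter ratio" inequalities \eqref{quasiprop}, and conversely. The key observation is that for a non-trivial bounded set $B$, its diameter is comparable (up to a factor of $2$) to $\sup_{x,y\in B}d(x,y)$ realized arbitrarily closely by pairs of points, so controlling diameters is essentially the same as controlling three-point ratios $|xa|/|xb|$ after choosing $x$ to be a near-extremal point.

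First, assume $f$ is PQ-symmetric with control function $\eta(t)=q\max\{t^p,t^{1/p}\}$. Let $B_2\subset B_1$ be non-trivial bounded sets. Pick $x,a\in B_2$ with $|xa|$ close to $\operatorname{diam}(B_2)$ and pick $b\in B_1$ with $|xb|$ close to $\operatorname{diam}(B_1)$ (using also that $|xb|\le\operatorname{diam}(B_1)$ and $|xa|\ge$ a definite fraction of $\operatorname{diam}(B_2)$ by the triangle inequality once a near-diametral pair of $B_2$ is fixed). Since $|xa|\le t|xb|$ with $t\approx\operatorname{diam}(B_2)/\operatorname{diam}(B_1)$, quasi-symmetry gives $|f(x)f(a)|\le\eta(t)|f(x)f(b)|$, and since $|f(x)f(a)|$ is comparable to $\operatorname{diam}(f(B_2))$ (here boundedness of $f$ guarantees $f(B_1)$ is bounded so the diameters are finite) while $|f(x)f(b)|\le\operatorname{diam}(f(B_1))$, one obtains the upper bound in \eqref{quasiprop} with $\lambda$ depending on $p$ and $A$ on $p,q$. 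Applying the same argument to $f^{-1}$, or more simply running the inequality in the reverse direction (small ratio forces small ratio), yields the lower bound. One must be slightly careful that $f$ is injective enough for diameters to behave well; this follows from the non-constancy and quasi-symmetry, which force $f$ to be an embedding.

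Conversely, assume $f$ is bounded and \eqref{quasiprop} holds. Given $a,b,x\in X$ with $|xa|\le t|xb|$, set $B_1=\{x,a,b\}$ and consider a suitable two-point subset. The idea is: if $|xa|$ is much smaller than $|xb|$, take $B_2=\{x,a\}\subset B_1$ where $\operatorname{diam}(B_1)$ is comparable to $|xb|$ and $\operatorname{diam}(B_2)=|xa|$; then \eqref{quasiprop} bounds $\operatorname{diam}(f(B_2))=|f(x)f(a)|$ above by $\frac1A(\operatorname{diam} B_2/\operatorname{diam} B_1)^{1/\lambda}\operatorname{diam}(f(B_1))$, and $\operatorname{diam}(f(B_1))$ is comparable to $|f(x)f(b)|$ (using \eqref{quasiprop} once more on the pair $\{x,b\}\subset B_1$ to see $\operatorname{diam}(f(B_1))\le$ const$\cdot|f(x)f(b)|$). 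Assembling these gives $|f(x)f(a)|\le q'\max\{t^{\lambda},t^{1/\lambda}\}|f(x)f(b)|$ for an appropriate $q'$, after handling the case $t\ge 1$ symmetrically. The control function is then power-type, so $f$ is PQ-symmetric; boundedness is part of the hypothesis.

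The main obstacle is bookkeeping the comparisons between $\operatorname{diam}$ of a set and the distances $|f(x)f(a)|$, $|f(x)f(b)|$ for the particular extremal or near-extremal points chosen — in particular ensuring that $\operatorname{diam}(f(B_i))$ is genuinely comparable (both above and below) to the distance between the chosen pair, which requires invoking \eqref{quasiprop} (or quasi-symmetry) a second time on an auxiliary pair inside $B_1$. Getting the exponents and multiplicative constants to line up into the exact forms $q\max\{t^p,t^{1/p}\}$ and $A(\cdot)^\lambda$, $\frac1A(\cdot)^{1/\lambda}$ is routine but must be done carefully; I would state the implications with possibly-worse constants and then absorb them, noting that the precise values of $p,q,\lambda,A$ are unimportant for the applications.
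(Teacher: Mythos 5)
Your plan matches the paper's proof in both directions: near-diametral pairs together with the two branches of the control function $\eta$ for the forward implication, and the three-point set $B_1=\{x,a,b\}$ with the two-point subsets $\{x,a\}$ and $\{x,b\}$ plus a case split on $t\gtrless 1$ for the converse. The detour through $f^{-1}$ for the lower bound is unnecessary (the proposition only assumes $f$ is a map); the direct alternative you mention — choosing the near-extremal pair in $B_1$ rather than $B_2$ and invoking the $t^{1/p}$ branch — is exactly what the paper does.
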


\begin{proof} Suppose $f$ is a PQ-symmetric map with constants $p,q$. Obviously
it is bounded.  Let $B_2\subset B_1$ be any pair of non-trivial
bounded sets and $x,a$ any pair of points in $B_2$ with $|xa|\geq
\frac{1}{2}diam(B_2)$. Consider $t\geq 1$ such that $|xb|\leq
t\cdot |xa|$ for every $b\in B_1$ and such that there exists
$b_0\in B_1$ with $|xb_0|\geq \frac{t}{2}|xa|$.

Clearly, \begin{equation}\label{lema1} \frac{diam (B_2)}{diam
(B_1)}\leq \frac{|xa|}{\frac{t}{2}|xa|}=\frac{4}{t}.\end{equation}

Since $f$ is PQ-symmetric with constants $p,q$, $|x'b'|\leq
qt^p|x'a'|$ for $x'=f(x),a'=f(a),b'=f(b)$. Thus, \[\frac{diam
(f(B_2))}{diam (f(B_1))}\geq \frac{|x'a'|}{2qt^p |x'a'|}\geq
\frac{1}{2q}\Big(\frac{1}{t}\Big)^p.\]

From this, together with (\ref{lema1}), we obtain that
\begin{equation}\label{cotainf} \frac{diam (f(B_2))}{diam (f(B_1))}\geq \frac{1}{2q}\cdot
\frac{1}{4^p}\cdot \Big(\frac{diam (B_2)}{diam
(B_1)}\Big)^p.\end{equation}

We use a similar argument for the upper bound. Consider $x\in
B_2$, $a\in B_1$ with $|xa|\geq \frac{1}{3}diam(B_1)$ and $t\leq
1$ such that $|xb|<t|xa|$ for every $b$ in $B_2$ and such that
there exist $b_0\in B_2$ with $2|xb|>t|xa|$.

Clearly, \begin{equation}\label{lema2} \frac{diam (B_2)}{diam
(B_1)}\geq
\frac{\frac{t}{2}|xa|}{3|xa|}=\frac{t}{6}.\end{equation}

Since $f$ is PQ-symmetric with constants $p,q$, $|x'b'|\leq
qt^{\frac{1}{p}}|x'a'|$ with $x',a',b'$ denoting $f(x),f(a),f(b)$.
Thus,
\[\frac{diam (f(B_2))}{diam (f(B_1))}\leq \frac{2qt^{\frac{1}{p}}|x'a'|}
{|x'a'|}\leq 2qt^\frac{1}{p}.\]

This, together with (\ref{lema2}) yields
\begin{equation}\label{cotasup} \frac{diam (f(B_2))}{diam (f(B_1))}\leq 2q6^\frac{1}{p}
\cdot \Big(\frac{diam (B_2)}{diam
(B_1)}\Big)^\frac{1}{p}.\end{equation}

Therefore, it suffices to consider $\lambda=p$ and
$A:=1/\max\{2q4^p,2q6^{\frac{1}{p}}\}$.


Now let $a,b,x\in X$ with $|xb|\leq t|xa|$. Define $B_2:=\{a,x\}$
and $B_1:=\{a,b,x\}$. Clearly, $t|xa| \leq diam (B_1) \leq (t+1)
|xa|$ and \[\frac{1}{t+1}\leq \frac{diam (B_2)}{diam (B_1)}\leq
\frac{1}{t}.\]

Since $f$ is bounded, the diameter $diam (f(B_i))$ is a positive
real number, and there are constants $\lambda\geq 1,A>0$ such that
\[A\Big(\frac{1}{t+1}\Big)^\lambda \leq \frac{diam (f(B_2))}{diam
(f(B_1))}\leq \frac{|f(x)f(a)|}{|f(x)f(b)|}\] and
\begin{equation}\label{caso1} |f(x)f(b)|\leq diam (f(B_1)) \leq \frac{1}{A}(t+1)^\lambda
\cdot |f(x)f(a)|.\end{equation}

Now let $B'_2:=\{x,b\}$. Clearly, $\frac{1}{t}|xb|\leq
diam(B_1)\leq (1+\frac{1}{t})|xb|$ and \[\frac{t+1}{t} \leq
\frac{diam (B'_2)}{diam (B_1)}\leq t.\]

From (\ref{quasiprop}), we get that
\[\frac{|f(x)f(b)|}{diam(f(B_1))}\leq \frac{1}{A}(t)^{\frac{1}{\lambda}}\]
and \[|f(x)f(b)|\leq \frac{1}{A}(t)^{\frac{1}{\lambda}}\cdot
diam(f(B_1)).\]

This, together with (\ref{caso1}), yields
\begin{equation}\label{caso2} |f(x)f(b)|\leq
\frac{1}{A}(t)^{\frac{1}{\lambda}}\cdot \frac{1}{A}(t+1)^\lambda
\cdot |f(x)f(a)|.\end{equation}

If $t\geq 1$, then $(t+1)^\lambda \leq 2^\lambda \cdot t^\lambda$
and from (\ref{caso1}) we obtain that \[|f(x)f(b)|\leq
\frac{2^\lambda}{A}(t)^\lambda \cdot |f(x)f(a)|.\]

If $t<1$, then $(t+1)^\lambda\leq 2^\lambda$ and from
(\ref{caso2}) we obtain that \[|f(x)f(b)|\leq
\frac{2^\lambda}{A^2} t^ {\frac{1}{\lambda}}\cdot |f(x)f(a)|.\]

Therefore, making $p=\lambda$ and
$q=\max\{\frac{2^\lambda}{A},\frac{2^\lambda}{A^2}\}$, $f$ is
PQ-symmetric.
\end{proof}

\begin{definicion} A map is \emph{metrically proper} if the
inverse image of a bounded set is bounded.
\end{definicion}

\begin{prop}\label{mp} If $f$ is a PQ-symmetric, then it is metrically proper.
\end{prop}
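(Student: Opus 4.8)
The plan is to argue by contradiction, exploiting the characterization of PQ-symmetric maps furnished by Proposition~\ref{pq}. Suppose $f:Z\to Z'$ is PQ-symmetric but not metrically proper. Then there is a bounded set $B'\subset Z'$ whose preimage $f^{-1}(B')$ is unbounded. Fix a point $x\in f^{-1}(B')$ and pick a sequence $(a_n)\subset f^{-1}(B')$ with $|xa_n|\to\infty$. Since $f$ is PQ-symmetric it is not constant, so we may also fix a point $b\in Z$ with $f(b)\ne f(x)$; by enlarging $B'$ if necessary we may assume $b\in f^{-1}(B')$ as well, so that $\operatorname{diam} f(\{x,b,a_n\})\le\operatorname{diam}B' =:M$ for all $n$, while $\operatorname{diam} f(\{x,b\})=|f(x)f(b)|>0$ is a fixed positive constant.

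Now I would apply inequality~(\ref{quasiprop}) of Proposition~\ref{pq} to the nested bounded sets $B_2:=\{x,b\}\subset B_1:=\{x,b,a_n\}$. The lower bound gives
\[
A\Big(\frac{\operatorname{diam} B_2}{\operatorname{diam} B_1}\Big)^{\lambda}\le
\frac{\operatorname{diam} f(B_2)}{\operatorname{diam} f(B_1)}\le\frac{|f(x)f(b)|}{0^{+}}\cdot\frac{1}{1},
\]
more precisely $\dfrac{\operatorname{diam} f(B_2)}{\operatorname{diam} f(B_1)}\ge\dfrac{|f(x)f(b)|}{M}$, hence
\[
A\Big(\frac{\operatorname{diam}\{x,b\}}{\operatorname{diam}\{x,b,a_n\}}\Big)^{\lambda}\ \le\ \frac{M}{|f(x)f(b)|}.
\]
But $\operatorname{diam}\{x,b,a_n\}\ge|xa_n|\to\infty$ while $\operatorname{diam}\{x,b\}=|xb|>0$ is fixed, so the left-hand side tends to $0$ — that alone is not yet a contradiction, so the bound must be pushed the other way. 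The correct move is to use that $f$ being PQ-symmetric means (via the control function $\eta(t)=q\max\{t^{p},t^{1/p}\}$, or equivalently via the \emph{right-hand} inequality of (\ref{quasiprop})) that a large ratio of diameters in the target forces a large ratio in the source. Concretely, set $B_2=\{x,a_n\}\subset B_1=\{x,b,a_n\}$ and apply the lower bound of (\ref{quasiprop}):
\[
A\Big(\frac{\operatorname{diam}\{x,a_n\}}{\operatorname{diam}\{x,b,a_n\}}\Big)^{\lambda}
\ \le\ \frac{\operatorname{diam} f(\{x,a_n\})}{\operatorname{diam} f(\{x,b,a_n\})}\ \le\ \frac{M}{\,|f(x)f(b)|\,}.
\]
Since $\operatorname{diam}\{x,a_n\}=|xa_n|$ and $\operatorname{diam}\{x,b,a_n\}\le|xb|+2|xa_n|\le 3|xa_n|$ for $n$ large, the ratio on the left is bounded below by $(1/3)^{\lambda}$, a fixed positive constant; thus $|f(x)f(b)|\le M/(A\,3^{-\lambda})$, which is merely a finite bound and still not a contradiction. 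The genuine contradiction comes from letting $b$ vary: for \emph{any} $z\in Z$ the same computation, with $B_1=\{x,z,a_n\}$ and $n$ chosen so large that $|xa_n|>|xz|$, yields $|f(x)f(z)|\le 3^{\lambda}M/A$, a bound \emph{independent of $z$}, forcing $f(Z)$ to be bounded — contradicting that $f$, being PQ-symmetric hence unbounded-to-unbounded, cannot have bounded image when $Z$ is unbounded; and if $Z$ is bounded there is nothing to prove since then $f^{-1}$ of anything is bounded.

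The step I expect to be the main obstacle is getting the quantifiers in the right order: one must first fix the target bounded set $B'$ and the source point $x$, then show a \emph{uniform} bound on $|f(x)f(z)|$ over $z\in f^{-1}(B')$ by absorbing the ``bad'' point $a_n$ into the larger set $B_1$ and using that $|xa_n|$ dominates everything else, and only afterwards conclude that $f(f^{-1}(B'))$ — and more delicately, that $f^{-1}(B')$ itself — must be bounded. Care is also needed in the trivial edge cases: if $Z$ has diameter zero (a single point) the statement is vacuous, and one should note that a PQ-symmetric map is injective on pairs with positive distance so that the denominators $|f(x)f(b)|$ appearing above are genuinely positive.
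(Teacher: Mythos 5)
Your computations are sound as far as they go, but the final contradiction rests on an assertion you never prove: that a PQ-symmetric map of an unbounded space cannot have bounded image (``being PQ-symmetric hence unbounded-to-unbounded''). That claim carries all the weight at the end, it appears nowhere in the paper, and as written it is a gap. It is true and quick to fill from the definition: fix $b$ with $f(b)\neq f(x)$ (possible since $f$ is not constant); for $a$ with $|xa|$ large we have $|xb|\le s|xa|$ with $s=|xb|/|xa|$ small, so quasi-symmetry gives $|f(x)f(b)|\le \eta(s)|f(x)f(a)|$, i.e. $|f(x)f(a)|\ge |f(x)f(b)|/\eta(s)\to\infty$ as $s\to 0$. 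Note, however, that once you have this two-line fact, applying it to $a=a_n\in f^{-1}(B')$ proves the proposition outright ($|f(x)f(a_n)|\to\infty$ while $f(a_n)\in B'$), so the entire detour through the uniform bound on $|f(x)f(z)|$ for all $z\in Z$ and the boundedness of $f(Z)$ is unnecessary.

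The second point worth flagging is that you had the paper's proof in hand at your very first step and discarded it by applying the wrong half of (\ref{quasiprop}). With $B_2=\{x,b\}\subset B_1=\{x,b,a_n\}$, both contained in $f^{-1}(B')$, the \emph{upper} bound gives
\[
\frac{|f(x)f(b)|}{M}\ \le\ \frac{\operatorname{diam} f(B_2)}{\operatorname{diam} f(B_1)}\ \le\ \frac{1}{A}\Big(\frac{|xb|}{\operatorname{diam} B_1}\Big)^{1/\lambda}\ \longrightarrow\ 0,
\]
because $f(B_1)\subset B'$ bounds the left-hand denominator by $M$, while $\operatorname{diam} B_1\ge |xa_n|\to\infty$ kills the right-hand side. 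This is exactly the paper's argument: the ratio of image diameters is bounded \emph{below} by a fixed positive constant (images trapped in $B'$), whereas the upper bound of (\ref{quasiprop}) forces it to zero. You instead applied the lower bound, correctly observed it yields nothing, and then built a workaround whose last step is the unproved claim above. So the proposal is repairable, but as it stands the decisive step is asserted rather than proved, and the direct one-line use of the other inequality is missed.
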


\begin{proof} Let $V$ be a bounded non-trivial set in $Y$ and
suppose $f^{-1}(V)$ is not bounded. Consider $B'_2:=\{y_1,y_2\}$
any pair of points in $V$, $B_2=\{x_1,x_2\}$ with $
x_i=f^{-1}(y_i) \ i=1,2$ and suppose $D=diam(V)$. Now we can
choose $B_2\subset B_1\subset f^{-1}(V)$ with $diam(B_1)$ as big
as we want, and by \ref{pq}, this leads to contradiction with
\[\frac{diam (f(B_2))}{diam (f(B_1))} \leq
\frac{1}{A}\Big(\frac{diam (B_2)}{diam
(B_1)}\Big)^\frac{1}{\lambda}\] for fixed constants $\lambda,A$.
\end{proof}





The following technical lemma will be used in the proof of the
theorem. The inequality is only needed to depend on the fixed
constant, so it is not pretended to be optimal.

\begin{lema}\label{union} Let $A_1\subset D_1, \ A_2 \subset D_2$ be bounded
sets in a metric space with $diam (D_i) \leq a\cdot diam (A_i)$,
$i=1,2$ for some constant $a >1$. Then, $diam (D_1 \cup D_2) <
(4a+2) diam (A_1\cup A_2)$.
\end{lema}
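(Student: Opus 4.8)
The plan is to separate two cases according to whether the sets $D_1\cup D_2$ is ``dominated'' by a single one of the $D_i$'s or whether its diameter genuinely involves a point from each. First I would fix points realizing the diameters up to a factor: pick $x_i,y_i\in A_i$ with $d(x_i,y_i)\geq \frac{1}{2}diam(A_i)$, so that in particular $diam(D_i)\leq a\cdot diam(A_i)\leq 2a\, d(x_i,y_i)$. I would also note $diam(A_1\cup A_2)\geq \max\{diam(A_1),diam(A_2)\}$ trivially.

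The main step is to estimate $diam(D_1\cup D_2)$. Take any $u\in D_1$ and $v\in D_2$ (the case of two points both in the same $D_i$ is immediate since then $d(u,v)\leq diam(D_i)\leq a\cdot diam(A_i)\leq a\cdot diam(A_1\cup A_2)$). Using the triangle inequality along a path $u \to x_1 \to x_2 \to v$, I get
\[
d(u,v)\leq d(u,x_1)+d(x_1,x_2)+d(x_2,v)\leq diam(D_1)+d(x_1,x_2)+diam(D_2).
\]
Now $d(x_1,x_2)\leq diam(A_1\cup A_2)$ since $x_1\in A_1$, $x_2\in A_2$, and each $diam(D_i)\leq a\cdot diam(A_i)\leq a\cdot diam(A_1\cup A_2)$. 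Hence $d(u,v)\leq (2a+1)\,diam(A_1\cup A_2)$. Combining with the within-one-set case, $diam(D_1\cup D_2)\leq (2a+1)\,diam(A_1\cup A_2) < (4a+2)\,diam(A_1\cup A_2)$, which is the claimed bound (in fact with a better constant; as the statement remarks, optimality is not the point).

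I do not anticipate a serious obstacle here — the argument is a routine triangle-inequality chase. The only mild subtlety is making sure the ``one-set'' case is handled so that the bound is uniform, and that we do not accidentally need $A_1\cap A_2\neq\emptyset$ or any separation hypothesis; the estimate $d(x_1,x_2)\leq diam(A_1\cup A_2)$ holds regardless because both points lie in $A_1\cup A_2$. So the proof is short: fix diameter-realizing points in each $A_i$, bound each $diam(D_i)$ by $a\cdot diam(A_1\cup A_2)$, and chain the triangle inequality through $x_1,x_2$.
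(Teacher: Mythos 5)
Your proof is correct and follows essentially the same route as the paper's: a case split on whether the two points lie in one $D_i$ or one in each, followed by the triangle inequality chained through a point of $A_1$ and a point of $A_2$. You even obtain the sharper constant $2a+1$ by bounding arbitrary pairs $u\in D_1$, $v\in D_2$ directly, whereas the paper picks points realizing half the diameter of $D_1\cup D_2$ and so loses a factor of $2$, arriving at $4a+2$.
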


\begin{proof} Consider $y_1,y_2 \in D_1 \cup D_2$ such that
$\frac{1}{2}diam(D_1\cup D_2) < d(y_1,y_2)$. If $y_1,y_2 \in D_i$,
for $i=0,1$ then $diam(D_1\cup D_2)<2 diam (D_i)\leq 2a\cdot
diam(A_i)\leq 2a \cdot diam(A_1\cup A_2)$ holding the condition of
the lemma. Otherwise, suppose (relabelling if necessary) $y_1\in
D_1$ and $y_2\in D_2$. Then, for any $x_1\in A_1$, $x_2\in A_2$,
$\frac{1}{2}diam(D_1\cup D_2) < d(y_1,y_2)\leq d(y_1,x_1)
+d(x_1,x_2)+d(x_2,y_2)\leq a \cdot diam (A_1) +diam (A_1\cup A_2)+
a \cdot diam (A_2)\leq (2a+1)diam(A_1\cup A_2)$ finishing the
proof.
\end{proof}

The main theorem is a natural extension of Theorem 7.4 in
\cite{BoS} (for clarity, the statement is adapted to the
definitions as used along this paper) and Theorem 7.2.1 in
\cite{BS}:

\begin{teorema}[Bonk-Schramm] Any PQ-symmetric homeomorphism
$f:Z\to Z'$ of bounded metric spaces can be extended to a
quasi-isometry between their hyperbolic cones $\hat{f}: Con(Z) \to
Con(Z')$.
\end{teorema}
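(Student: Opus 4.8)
\emph{Sketch of proof.} Recall from \cite{BoS} that the points of $Con(Z)$ can be taken to be the pairs $(z,h)$ with $z\in Z$ and $0<h\le\operatorname{diam}Z$, and that, up to a uniformly bounded additive error, the cone metric is
\[
\rho\big((z,h),(z',h')\big)=2\log\frac{\max\{d(z,z'),\,h,\,h'\}}{\sqrt{h\,h'}},
\]
so that $\partial_\infty Con(Z)=Z$ and the original metric of $Z$ is visual there. The plan is to send $(z,h)$ to the point of $Con(Z')$ that represents the image of the ball $\bar B_h(z)$. For fixed $z$ the function $h\mapsto\operatorname{diam}f(\bar B_h(z))$ is non-decreasing, tends to $0$ as $h\to 0$ (because $f$ is continuous at $z$), and equals $\operatorname{diam}Z'$ at $h=\operatorname{diam}Z$; interpolating it affinely in $\log h$ across the scale intervals on which $\bar B_h(z)$ is constant (and across its jumps) produces a continuous increasing $\psi_z\colon(0,\operatorname{diam}Z]\to(0,\operatorname{diam}Z']$ (in particular the radial rays over isolated points, along which $\bar B_h(z)=\{z\}$, are covered this way). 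Set $\hat f(z,h):=(f(z),\psi_z(h))$. Since $\psi_z(h)\to 0$ as $h\to 0$, one has $\partial_\infty\hat f=f$, and it remains to show $\hat f$ is a quasi-isometry.

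The mechanism is that Proposition \ref{pq} turns, after taking logarithms, into a linear (coarse bi-Lipschitz) estimate---exactly what the $\log$ in $\rho$ calls for. On a single radial ray ($z=z'$) one has $\rho((z,h),(z,h'))=\lvert\log h-\log h'\rvert$, and applying Proposition \ref{pq} to nested balls $\bar B_h(z)\subseteq\bar B_{h'}(z)$ at the scales where $\bar B_\cdot(z)$ changes, together with the affine-in-$\log h$ interpolation in between, bounds $\lvert\log\psi_z(h)-\log\psi_z(h')\rvert$ both above and below by $\Lambda\lvert\log h-\log h'\rvert+C$ for uniform $\Lambda\ge1,\ C\ge0$. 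For a general pair one passes to the ``ball model'': writing $D=\bar B_h(z)$, $D'=\bar B_{h'}(z')$, one has $\operatorname{diam}(D\cup D')\asymp d(z,z')+h+h'$ and $f(D\cup D')=f(D)\cup f(D')$, so applying Proposition \ref{pq} to $D\subseteq D\cup D'$ and $D'\subseteq D\cup D'$ and passing to logarithms yields
\[
\tfrac1\Lambda\,\rho\big((z,h),(z',h')\big)-C\ \le\ \rho'\big(\hat f(z,h),\hat f(z',h')\big)\ \le\ \Lambda\,\rho\big((z,h),(z',h')\big)+C .
\]
An estimate of the type in Lemma \ref{union} is what lets one trade a ball for a comparable non-degenerate subset (and back) while keeping control of the diameter of a union, making the passage between the ball model and the $(z,h)$-coordinates uniform.

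For surjectivity up to a net: given $(z',h')\in Con(Z')$, put $z=f^{-1}(z')$; as $h$ runs over $(0,\operatorname{diam}Z]$ the continuous increasing $\psi_z$ sweeps out all of $(0,\operatorname{diam}Z']$---Proposition \ref{mp} is what rules out ``missing'' scales---so some $h$ has $\psi_z(h)=h'$ and then $\hat f(z,h)=(z',h')$. Hence $\hat f$ is a quasi-isometry inducing $f$. The delicate point, and the step I expect to carry the real weight, is the failure of uniform perfectness: a ball $\bar B_h(z)$ may be ``stuck'' over a multiplicatively long range of scales, producing a long radial neck of $Con(Z)$ that must be matched with the corresponding neck of $Con(Z')$; one has to check, uniformly in $z$, that the two neck lengths are linearly (not merely power) comparable---again Proposition \ref{pq} read through a logarithm---and then splice all these affine-in-$\log h$ pieces into one map without spoiling $\Lambda$ and $C$. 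Everything else is bookkeeping with the two inequalities of Proposition \ref{pq}.
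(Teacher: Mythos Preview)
The paper does not prove this theorem. It is quoted, with attribution, from \cite{BoS} as background for the paper's own main result (Theorem~\ref{tma}), and no argument for it appears here; so there is no ``paper's own proof'' to compare your attempt against.

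Your sketch is nonetheless on the right track and, interestingly, mirrors the architecture of the paper's proof of Theorem~\ref{tma} (which treats hyperbolic \emph{approximations} rather than cones, and drops the uniform-perfectness hypothesis). There the map is first defined only on the \emph{splitting points}---vertices whose ball strictly contains some ball at the next level---by sending $v$ to a vertex of maximal level whose ball contains $f(B(v))$; Proposition~\ref{pq} then gives the quasi-isometric estimate on this subset (Claims~2--3 of that proof). Non-splitting vertices with non-degenerate ball are handled by linear interpolation along the radial geodesic between the nearest splitting points above and below (Claims~4--5), which is exactly your ``affine in $\log h$'' device; vertices with degenerate ball $B(v)=\{v\}$ are treated separately by isometrically matching the two infinite radial tails. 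The ``neck'' difficulty you single out is precisely why the paper must distinguish splitting from non-splitting vertices, and the linear comparability of neck lengths does come from reading Proposition~\ref{pq} through a logarithm at the two splitting levels bounding the neck---so both your diagnosis of where the weight lies and your proposed mechanism for discharging it agree with the paper's strategy for its own theorem.
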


\begin{teorema}[Buyalo-Schroeder] For any quasi-symmetric homeomorphism
$f:Z\to Z'$ of uniformly perfect, complete metric spaces, there is
a quasi-isometry of their hyperbolic approximations $F: X \to X'$
which induces $f$, $\partial_\infty F(z)=f(z) \; \forall z\in Z$.
\end{teorema}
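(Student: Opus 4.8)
The plan is to build the quasi-isometry $F$ combinatorially — matching each ball of the covering that defines $X$ with a ball of the covering that defines $X'$ whose $f$-image has comparable diameter — and then to read off both distance estimates from an explicit formula expressing the distance in a hyperbolic approximation in terms of diameters of covering balls. First I reduce to the PQ-symmetric case: a quasi-symmetric homeomorphism out of a uniformly perfect space is automatically power quasi-symmetric, and so is its inverse, while the quasi-symmetric image of a uniformly perfect space is uniformly perfect (see \cite{BS}). Thus $Z'$ is uniformly perfect and both $f$ and $f^{-1}$ satisfy the conclusion of Proposition~\ref{pq} with some constants $\lambda\geq 1$, $A>0$. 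Uniform perfectness of $Z$ gives a constant $\mu>1$ with $r^{k}/\mu\leq diam\,B_{2r^{k}}(z)\leq 4r^{k}$ for all $z\in Z$ and all $k$ with $r^{k}\leq diam\,Z$; equivalently $l(v)=\log_{r}diam\,B(v)+O(1)$ for every vertex $v$ of $X$, the implied constant depending only on $\mu$ and $r$. This is the one place uniform perfectness is used, and it is what prevents the level function and the diameters of covering balls from decoupling.

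Next I record a metric formula. For vertices $v,v'$ let $w$ be a branch point of $\{v,v'\}$ (Lemma~\ref{branch}); concatenating the radial geodesics from $v$ and from $v'$ down to $w$ gives $|vv'|\leq (l(v)-l(w))+(l(v')-l(w))$, while a geodesic as in Lemma~\ref{geod1} together with Lemma~\ref{cone} (a sufficiently low vertex on the geodesic is, up to one level, a cone point) gives the reverse up to an additive constant. Combining Lemmas~\ref{split}, \ref{diam1}, \ref{cone} and \ref{diam2} yields $r^{l(w)}\asymp diam\big(B(v)\cup B(v')\big)$ with multiplicative constants depending only on $r$ (taking $w$ to be the lower of $v,v'$ covers the radially joined case). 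Hence
\[|vv'|\;=\;\log_{r}\frac{diam\,B(v)\cdot diam\,B(v')}{diam\big(B(v)\cup B(v')\big)^{2}}\;+\;O(1),\]
and the same formula, with parameter $r'$ and its own additive constant, holds in $X'$.

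Now define $F$. Given $v\in V_{k}$, the set $f(B(v))\subset Z'$ is bounded and non-degenerate; let $m(v)$ be an integer with $r'^{\,m(v)}\asymp diam\,f(B(v))$ (unique up to $O(1)$, pick the largest) and let $F(v)$ be any vertex of $X'$ of level $m(v)$ whose ball contains $f(v)$, which exists by maximality of $V'_{m(v)}$. Then $diam\,B(F(v))\asymp diam\,f(B(v))$ and $f(v)\in B(F(v))$. For two vertices $v,v'$, both $B(F(v))\cup B(F(v'))$ and $f\big(B(v)\cup B(v')\big)=f(B(v))\cup f(B(v'))$ contain $f(v)$ and $f(v')$, and the triangle inequality together with the two-sided bound $diam\,B(F(\cdot))\asymp diam\,f(B(\cdot))$ shows each of them has diameter comparable to $diam\,f(B(v))+diam\,f(B(v'))+diam\{f(v),f(v')\}$ (cf.\ Lemma~\ref{union}); so $diam\big(B(F(v))\cup B(F(v'))\big)\asymp diam\,f\big(B(v)\cup B(v')\big)$. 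Feeding these comparisons into the two metric formulas gives $|F(v)F(v')|=\log_{r'}\frac{diam\,f(B(v))\cdot diam\,f(B(v'))}{diam\,f(B(v)\cup B(v'))^{2}}+O(1)$. Finally apply Proposition~\ref{pq} with $B_{1}=B(v)\cup B(v')$ and $B_{2}=B(v)$, resp.\ $B_{2}=B(v')$: inequality (\ref{quasiprop}) bounds the quantity $\log\frac{diam\,f(B(v)\cup B(v'))^{2}}{diam\,f(B(v))\,diam\,f(B(v'))}$ above and below by affine functions (slopes $\lambda$ and $1/\lambda$) of $\log\frac{diam\,(B(v)\cup B(v'))^{2}}{diam\,B(v)\,diam\,B(v')}$, i.e.\ of $|vv'|$. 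Hence $F$ is a quasi-isometric embedding with constants depending only on $\lambda,A,r,r',\mu$ and the uniform-perfectness constant of $Z'$. Running the identical construction for $f^{-1}$ gives $G\colon X'\to X$, and the two formulas show $G\circ F$ and $F\circ G$ move points a bounded distance, so $F$ is a quasi-isometry. For the boundary claim, if $z\in Z$ is the endpoint of a radial geodesic $v_{0},v_{1},\dots$ with $z\in B(v_{k})$ and $l(v_{k})=k$, then $diam\,f(B(v_{k}))\to 0$ (as $f$ is a homeomorphism and $diam\,B(v_{k})\to 0$), so $l(F(v_{k}))\to\infty$, while $f(z)\in B(F(v_{k}))$ for all $k$; thus $F(v_{k})\to f(z)$ in $\partial_{\infty}X'$, i.e.\ $\partial_{\infty}F(z)=f(z)$ for every $z\in Z$ (and $\partial_{\infty}F$ fixes the extra point $\infty$).

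The main obstacle is the displayed metric formula: everything downstream is bookkeeping with Proposition~\ref{pq} and Lemma~\ref{union}, but getting $|vv'|=\log_{r}(\cdots)+O(1)$ with a \emph{uniform} additive error means assembling the branch-point lemmas carefully — in particular that the deepest level reached by a geodesic between $v$ and $v'$ coincides, up to a bounded number of levels, with the level of the branch point. This is exactly where uniform perfectness is indispensable: without $diam\,B(v)\asymp r^{l(v)}$ the radial geodesics can stall on levels at which the covering ball does not shrink, level and ball-diameter decouple, and both the formula and the whole matching scheme collapse — which is why the hypothesis cannot simply be dropped at this level of generality, and is the point of departure for Theorem~\ref{tma}.
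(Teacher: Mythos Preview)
Your proof is correct. Note, however, that the paper does not itself prove this statement: it is quoted as Theorem~7.2.1 of \cite{BS} and then superseded by the paper's own Theorem~\ref{tma}, which drops the uniform-perfectness hypothesis. Your argument is essentially the Buyalo--Schroeder one: uniform perfectness forces $diam\,B(v)\asymp r^{l(v)}$ for \emph{every} vertex, which gives the clean global formula $|vv'|=\log_r\frac{diam\,B(v)\cdot diam\,B(v')}{diam(B(v)\cup B(v'))^{2}}+O(1)$, after which everything is a straightforward application of Proposition~\ref{pq} and Lemma~\ref{union}; the net property you obtain by building the quasi-inverse from $f^{-1}$.

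By contrast, the paper's proof of Theorem~\ref{tma} cannot use such a formula, because without uniform perfectness $diam\,B(v)$ and $r^{l(v)}$ decouple---exactly the phenomenon you flag in your last paragraph. The paper therefore defines $F$ first only on the set $|V|$ of splitting points (where Lemma~\ref{diam1} still controls diameters), proves the quasi-isometry estimate there via branch points (Claims~2--3), and then extends $F$ to the remaining vertices by interpolating along radial segments between consecutive splitting points, with an ad hoc treatment of degenerate balls and of the root vertex in the bounded case (Claims~4--6). It also argues the net property directly (Claim~8) rather than through a quasi-inverse. Your route is shorter and more transparent under the stated hypotheses; the paper's case analysis is precisely the price of removing them.
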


\begin{teorema}\label{tma} For any PQ-symmetric homeomorphism $f:Z\to Z'$ of complete
metric spaces, there is a quasi-isometry of their hyperbolic
approximations $F: X \to X'$ which induces $f$, $\partial_\infty
F(z)=f(z) \; \forall z\in Z$. Moreover, this quasi-isometry can be
made continuous.
\end{teorema}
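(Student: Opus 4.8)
The plan is to adapt Buyalo--Schroeder's proof of Theorem 7.2.1 in \cite{BS}, which treats the uniformly perfect case, and to remove that hypothesis by repairing the one place where it is used, namely the estimate $\operatorname{diam} B(v)\asymp r^{l(v)}$. In our generality this estimate holds only at \emph{splitting} vertices (Lemma \ref{diam1}); it fails along the maximal radial segments built from non-splitting vertices, on which $B(v)\cap Z$ stays constant, and such ``stalls'' can be arbitrarily long, even infinite near an isolated point of $Z$. Controlling $F$ on these segments is the main obstacle, and the structural Lemmas \ref{split}, \ref{diam1}, \ref{cone}, \ref{diam2} together with Lemma \ref{union} are exactly what is needed for it. Completeness of $Z$ and $Z'$ enters to guarantee that radial rays converge to honest points, so that the phrase ``$F$ induces $f$'' is meaningful.

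First I would define $F$ on the vertex set $V$ of $X$. Since $f$ is PQ-symmetric it is bounded (Proposition \ref{pq}), so each $f(B(v)\cap Z)\subset Z'$ is bounded; when it is non-trivial --- in particular when $v$ is splitting --- I let $F(v)$ be a vertex $v'\in V'$ with $f(B(v)\cap Z)$ roughly contained in $B'(v')$ and $\operatorname{diam} B'(v')\asymp\operatorname{diam} f(B(v)\cap Z)$, i.e. with $l(v')$ of order $\log_{1/r'}\!\bigl(1/\operatorname{diam} f(B(v)\cap Z)\bigr)$; any two admissible choices lie at bounded distance by Lemma \ref{intersect}. Each remaining vertex sits on a maximal radial segment $\sigma$ of non-splitting vertices whose ends are splitting vertices, or, when $\sigma$ is infinite, with one end escaping to $\omega$ (downward) or to a point $\xi\in Z$ (upward); $F$ is already defined at the finite ends, and I extend it over $\sigma$ by affine interpolation in the level parameter (slope $1$ at infinite ends). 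Finally I extend $F$ from $V$ to $X$ by sending each edge to a geodesic of $X'$ of uniformly bounded length and declaring $\partial_\infty F(\omega)=\omega'$; a last small reparametrisation of these segments makes $F$ continuous.

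Next I would establish the quasi-isometry inequality using the standard description of distances in a hyperbolic approximation: by Lemma \ref{geod1} and the subsequent lemma on geodesics with at most one horizontal edge, $|vw|=l(v)+l(w)-2\,l(w_0)+O(1)$ with $w_0$ a branch point of $v,w$; by Lemmas \ref{split} and \ref{diam1}, $l(w_0)=\log_{1/r}\!\bigl(1/\operatorname{diam} B(w_0)\bigr)+O(1)$; and $\operatorname{diam} B(w_0)\asymp\operatorname{diam}\bigl(B(v)\cup B(w)\bigr)$ by Lemma \ref{diam2}, the upper bound being immediate since $B(v),B(w)\subset B(w_0)$. Writing the same description for $F(v),F(w)$ in $X'$, the comparison of $|F(v)F(w)|$ with $|vw|$ reduces to comparing $\operatorname{diam} f\bigl(B(v)\cup B(w)\bigr)$ with $\operatorname{diam}\bigl(B(v)\cup B(w)\bigr)$, relative to the analogous diameters at $v$ and at $w$. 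Here Lemma \ref{union} carries the separate control of $f(B(v))$ and $f(B(w))$ over to their union, and Proposition \ref{pq}, applied to the nested pairs $B(v)\subset B(v)\cup B(w)\supset B(w)$, furnishes two-sided power bounds, whence both $\tfrac1\lambda|vw|-A\le|F(v)F(w)|\le\lambda|vw|+A$. On a non-splitting segment the diameter argument is vacuous, but there the interpolation was built with slope in $[1/\lambda,\lambda]$; what must still be checked is that a segment of $X$ and the segment of $X'$ it is matched to have comparable length, which again follows from Proposition \ref{pq} applied at the two splitting endpoints.

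It remains to see that $F(V)$ is a net, that $\partial_\infty F=f$, and to collect the corollaries. For the net property I would run the construction symmetric to the above for $f^{-1}$, which is again PQ-symmetric (Proposition \ref{pq}) and metrically proper (Proposition \ref{mp}), so that every vertex of $X'$ is at bounded distance from the image of the corresponding vertex of $X$. For $\partial_\infty F=f$: by completeness a radial ray $v_k,v_{k+1},\dots$ converges to the unique point $\xi\in\bigcap_k B(v_k)$; the $F(v_k)$ have balls comparable to $f(B(v_k)\cap Z)$, whose diameters tend to $0$ by continuity of $f$ at $\xi$, hence $F(v_k)\to f(\xi)$, while radial rays with $l\to-\infty$ go to $\omega'$. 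Continuity of $F$ was arranged above, and the corollaries follow by applying the theorem to visual metrics on the boundaries together with the fact (\cite{BS}) that a visual hyperbolic geodesic space is quasi-isometric to a hyperbolic approximation of its boundary with respect to a visual metric. As stressed, the only genuinely new point over the uniformly perfect case is the control of $F$ along the long non-splitting segments.
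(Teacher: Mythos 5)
Your proposal follows essentially the same route as the paper: define $F$ on splitting vertices by sending $B(v)$ to a ball of $X'$ roughly containing $f(B(v))$ at maximal level (controlled via Lemma \ref{diam1} and Proposition \ref{pq}), interpolate affinely along the maximal non-splitting radial segments (with slope $1$ on the infinite segments converging to isolated points), and obtain the two-sided estimate from the branch-point description of distances together with Lemmas \ref{split}, \ref{diam2} and \ref{union}. The only (harmless) divergence is the net property, which the paper derives directly from surjectivity and metrical properness of $f$ (Proposition \ref{mp}) rather than by running the construction for $f^{-1}$.
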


\begin{proof} Let $X,X'$ be hyperbolic approximations of $Z,Z'$,
let us assume (without loss of generality, see \ref{cor6}) that
they are defined with the same parameter $r$, and let $V,V'$ be
their sets of vertices. Consider also, $\lambda,A$ the constants
of the characterization shown in \ref{pq} of being PQ-symmetric.

\underline{Claim 1}. For every vertex $v\in V$ there is a vertex
$v'\in V'$ for which the ball $B(v')$ contains $f(B(v))$ and such
that $l(v')$ is maximal. This is consequence of $f$ being bounded.

Consider $|V|$ the set of splitting points in $V$. Note that for
any $v\in |V|$, $B(v)$ is a non-degenerated ball.

Let us define first the map $F$ restricted to $|V|$. For every
$v\in |V|$, let $F(v)=v'$ with $v'$ any point holding the
condition in Claim 1. Note that, for any other point $v''$ with
the same condition, clearly $B(v')\cap B(v'')\neq \emptyset$ and
hence $|v'v''|\leq 1$.

\underline{Claim 2}. There exist constants $\lambda\geq 1$ and
$C_0>0$ such that for any pair of vertices $v_1,v_2$ in $|V|$ with
$B(v_1)\subset B(v_2)$ (in particular, if they are joined by a
radial geodesic)
\[\frac{1}{\lambda} |v_1v_2|-C_0\leq |F(v_1)F(v_2)|\leq \lambda
|v_1v_2|+C_0.\]

Let $k_1=l(v_1)$ and $k_2=l(v_2)$, and let us suppose $k_2\geq
k_1$. As we saw in Lemma \ref{diam1}, since $v_i$ are splitting
points, $r^{k_i+1} \leq diam(B(v_i))\leq 4r^{k_i}$. Then,
\[\frac{r}{4}\cdot r^{(k_2-k_1)} \leq \frac{r\cdot
r^{k_2}}{4r^{k_1}} \leq \frac{diam (B(v_2))}{diam (B(v_1))}\leq
\frac{4r^{k_2}}{r\cdot r^{k_1}}=\frac{4}{r}\cdot r^{(k_2-k_1)}.\]

This, together with \ref{quasiprop}, yields:

\[A\Big(\frac{r}{4}\cdot r^{(k_2-k_1)}\Big)^{\lambda}\leq
\frac{diam (f(B(v_2))}{diam (f(B(v_1)))}\leq
\frac{1}{A}\Big(\frac{4}{r}\cdot
r^{(k_2-k_1)}\Big)^{\frac{1}{\lambda}}.\]

Now, if $k'_1=l(v'_1)$ and $k'_2=l(v'_2)$, $r^{k'_i+1}\leq diam
(f(B(x_i,\epsilon_i)))\leq 4r^{k'_i}$, and therefore,
\[\frac{r}{4}\cdot r^{(k'_2-k'_1)}\leq
\frac{diam (f(B(v_2))}{diam (f(B(v_1)))}\leq \frac{4}{r}\cdot
r^{(k'_2-k'_1)}.\]

and hence we have:

\[A\Big(\frac{r}{4}\cdot r^{(k_2-k_1)}\Big)^{\lambda}\leq
\frac{4}{r}\cdot r^{(k'_2-k'_1)}\]

and

\[\frac{r}{4}\cdot r^{(k'_2-k'_1)}\leq \frac{1}{A}\Big(\frac{4}{r}\cdot r^{(k_2-k_1)}\Big)^{\frac{1}{\lambda}}.\]

obtaining that

\[\frac{r}{4}A\Big(\frac{r}{4}\cdot
r^{(k_2-k_1)}\Big)^\lambda\leq r^{(k'_2-k'_1)}\leq
\frac{4}{r}\frac{1}{A}\Big(\frac{4}{r}\cdot
r^{(k_2-k_1)}\Big)^{\frac{1}{\lambda}}.\]

Taking $C_1=log_r(A(\frac{r}{4})^{1+\lambda})$, $C_2=log_r
(\frac{1}{A}(\frac{4}{r})^{1+\frac{1}{\lambda}})$ and
$C_3:=\max\{|C_1|,|C_2|\}$, since $log_r$ is decreasing, it is
readily seen that

\[\frac{1}{\lambda}(k_2-k_1)-C_3\leq k'_2-k'_1 \leq \lambda (k_2-k_1)+C_3.\]

Since $B(F(v_1))\cap B(F(v_2))\neq \emptyset$, by Corollary 6.2.7,
$k'_2-k'_1\leq |F(v_1)F(v_2)|\leq k'_2-k'_1+1$ and making
$C_0:=C_3+1$, this proves Claim 2.

\underline{Claim 3}. There exist constants $\lambda\geq 0, \ C>0$
such that $F|_{|V|}$ is a $(\lambda,C)$ quasi-isometric map.
Consider any pair of vertices $v_1,v_2\in |V|$ such that none of
the balls $B(v_i)$ contains the other (hence, they are not joined
by a radial geodesic). Let $w$ be a branch point for them and
notice that by Lemma \ref{split}, $w\in |V|$. Now $|v_1w|+|wv_2|-1
\leq |v_1v_2| \leq |v_1w|+|wv_2|$. Let $w'$ be a branch point for
$F(v_1),F(v_2)$. Again, $|F(v_1)w'|+|w'F(v_2)|-1 \leq
|F(v_1)F(v_2)| \leq |F(v_1)w'|+|w'F(v_2)|$. Therefore, if there is
a constant $C_4$ depending only on $r,\lambda,A$ such that
$|F(w)w'|\leq C_4$, then Claim 4 follows immediately from Claim 3
substituting constant $C_0$ by $C=C_0+2C_4$. Let us show how the
existence of $C_4$ comes from (\ref{quasiprop}).

Let $l(w)=k_w$, and consider $B_2=B(v_1)\cup B(v_2)$ and
$B_1=B(w)$. As we saw in Lemma \ref{diam2}
\[\frac{r^2}{4} \leq \frac{diam (B_2)}{diam (B_1)}.\]

Applying \ref{quasiprop}, we obtain that

\begin{equation}\label{eq41} A\Big(\frac{r^2}{4}\Big)^\lambda \leq \frac{diam (f(B_2))}{diam (f(B_1))}
\leq 1.
\end{equation}

Clearly, $B(w')\cap B(F(w))\neq \emptyset$. Then $|w'F(w)|\leq
|k'_{F(w)}-k'_{w'}| +1$ where $k'_{F(w)}=l(F(w))$ and
$k'_{w'}=l(w')$, and it suffices to bound $|k'_{F(w)}-k'_{w'}|$.

It is immediate to check that
\[r^{k'_{w'}+1}\leq diam (B(F(v_1))\cup B(F(v_2)))\leq 4r^{k'_{w'}},\]
and
\[r^{k'_{F(w)}+1}\leq diam (f(B_1))\leq 4r^{k'_{F(w)}}.\]

We can apply Lemma \ref{union} with $A_i=f(B(v_i))$ and
$D_i=B(F(v_i))$. Since $diam (B(F(v_i)))\leq \frac{4}{r}diam
(f(B(v_i)))$, we obtain that
\begin{equation}\label{eq43} diam (B(F(v_1))\cup B(F(v_2))) \leq
\Big(\frac{16}{r}+2\Big)\cdot diam(f(B_2))
\end{equation}

From these, it can be readily seen that
\[A\Big(\frac{r^2}{4}\Big)^\lambda \leq \frac{diam (f(B_2))}{diam (f(B_1))}
\leq \frac{4}{r} r^{k'_{w'}-k'_{F(w)}}\] and
\[\frac{r^2}{4(16+2r)} r^{k'_{w'}-k'_{F(w)}} \leq \frac{diam (f(B_2))}{diam (f(B_1))}
\leq 1.\]

Taking logarithms to the base $r$ in these inequalities we can
bound $|k'_{F(w)}-k'_{w'}|+1$ with a constant $C_4$ only depending
on $A,\lambda,r$ and prove the claim.

\underline{Claim 4}. For any vertex $v\in V\backslash |V|$ such
that $B(v)$ is not degenerated nor the whole space, there exist
vertices $v_1,v_2\in |V|$ with $B(v_2)\subset B(v_1)$ and a radial
geodesic $[v_2,v_1]$ containing $v$. Moreover, we can ask
$v_1,v_2$ to be at maximal and minimal level respectively with
that property. This is immediate from the definition of splitting
point.

Consider $v\in V\backslash |V|$ and $v_1,v_2\in |V|$ as in Claim
4. Note that, though for $v'_1=F(v_1)$ and $v'_2=F(v_2)$
$l(v'_1)\leq l(v'_2)$, they might not be joined by a radial
geodesic. Nevertheless, $B(v'_1)\cap B(v'_2)\neq \emptyset$ and
therefore there is another vertex $w'_1$ which is in the same
level of $v'_1$, $l(w'_1)=l(v'_1)$, and such that $w'_1$ is joined
by a radial geodesic with $v'_2$ and by a horizontal edge with
$v'_1$. Now suppose that in the geodesic path $[v_1,v_2]$, abusing
of the notation, $v=t\cdot v_1+(1-t)\cdot v_2$ for some $t\in
(0,1)$. Let us define $F(v)=t\cdot w'_1+(1-t)\cdot v'_2$ in
$[w'_1,v'_2]$.

\underline{Claim 5}. A different choice of the points
$v_1,v_2,w'_1$ described in Claim 4 yields a different map, $F'$,
whose distance to $F$ is uniformly bounded by a constant. First,
note that any different choice for $v_1$ would be a vertex $u_1$
with $l(u_1)=l(v_1)$, $B(u_1)$ would intersect $B(v_1)$ and hence
$|u_2v_2|\leq 1$. From the definition of splitting point, it is
clear that $B(v_2)=B(v)$ and therefore, the election of $v_2$
is unique.
As we saw in Claim 3, $F|_{|V|}$ is a
$(\lambda,C)$-quasi-isometric map. Hence, 
$|w'_1w''_1|\leq \lambda +C+2$ for any vertex $w''_1$ at distance
1 from $F(u_1)$. It is clear that $|l(F(v))-l(F'(v))| \leq \lambda
+C+2$, and by Lemma \ref{intersect}, it is readily seen that
$|F(v)F'(v)|\leq 2\lambda +C+3$ concluding Claim 5.

Consider $v\in V$ such that $B(v)=\{v\}$. Let $k_v$ the minimal
$k$ such that $\{v\}\in V_k$. Obviously, for every $k\geq k_v$,
$\{v\}\in V_k$ so let us denote $v_k$ the vertex at level $k$
given by this degenerated ball ($B(v_k)=\{v\}$). There is some
vertex $u\in V_{k_v-1}$ such that $\{v\}\in B(u)$. Consider
$k'=l(F(u))$ and define $k_{f(v)}=k'+1$. Now, for every $k\geq
k_{f(v)}$ let $v'_k$ be a vertex in $V'_k$ containing $\{f(v)\}$
and joined by a radial edge with $v'_{k+1}$ (this can be always
considered taking vertices in $B(f(v),r^k)\cap V'_k$). Finally,
let us define $F(v_k)=v'_{k'}$ with $k'=k-k_v+k_{f(v)}$ for every
$k\geq k_v$.

Note that for $k > k_v$, the vertex $v_k$ is only joined by radial
edges with $v_{k-1},v_{k+1}$ and that $|F(v_{k_v})F(u)|\leq 2$.
Thus, the radial geodesic ray from $v_{k_v}$ towards $\{v\}$ is
sent isometrically to a radial geodesic ray from $v'_{k_{f(v)}}$
towards $\{f(v)\}$.

If the metric spaces $Z,Z'$ are unbounded then $F$ is already
defined on $V$. The only case left, is when $v$ is such that
$B(v)=Z$, but then, considering the truncated hyperbolic
approximations, it suffices to make $F(v)$ the vertex in the
minimal level of $X'$.

\underline{Claim 6}. There exist constants $\lambda \geq 1,\ C'>0$
such that $F|_V$ is a $(\lambda,C')$ quasi-isometric map on the
vertices. We already proved this for vertices in $|V|$. Let
$v_1,v_2 \in V$ representing non-degenerated balls. If $v_1,v_2$
are joined by a radial geodesic the claim is immediate from the
construction. Otherwise, let $w$ be a branch point for them (then
$w\in |V|$). Thus, the upper bound will be clear from Claim 2 and
the construction of $F$. The same argument from Claim 3 on the
existence of $C_4$ gives us now the lower bound.

If $v_i$ for $i=0$ or $1$ is a degenerated ball, consider the
minimal level $k_i$ such that $\{v_i\}\in V_{k_i}$ and its ball is
still degenerated and let $u_i$ be a vertex with $l(u_i)=k_{i-1}$
and such that $B(v_i) \subsetneq B(u_i)$. If $v_i$ has at least
two points, just let $u_i=v_i$. $F$ is a
$(\lambda,C)$-quasi-isometry on $u_1,u_2$ and
$|F(u_i)F(v_i)|=|u_iv_i|$ where the geodesics (in case $u_i\neq
v_i$) $[u_i,v_i],[F(u_i),F(v_i)]$ are radial and isometric.

If $F(u_1),F(u_2)$ are distinct and not joined by a radial
geodesic then
\[ |F(v_1)F(v_2)|= |F(v_1)F(u_1)|+|F(u_1)F(u_2)|+|F(u_2)F(v_2)|\]
and it follows that $F$ is a $(\lambda,C)$-quasi-isometry on
$v_1,v_2$.

Otherwise, the upper bound, $|F(v_1)F(v_2)|\leq \lambda
|v_1,v_2|+C$, is clear but not the lower one.

Note that for any branch point $b(F(v_1),F(v_2))$ of
$F(v_1),F(v_2)$, \[|F(v_1)
b(F(v_1),F(v_2))|+|b(F(v_1),F(v_2))F(v_2)|-1\leq
|F(v_1)F(v_2)|\leq \] \[ \leq |F(v_1)
b(F(v_1),F(v_2))|+|b(F(v_1),F(v_2))F(v_2)|.\]

Then, it suffices to check that the distance between
$b(F(v_1),F(v_2))$ and $b(F(u_1),F(u_2))$ is bounded by a constant
$C_5=C_5(A,\lambda,r)$ to assure that \[|F(v_1)F(v_2)|\geq
|F(v_1)F(u_1)|+|F(u_1)F(u_2)|+|F(u_2)F(v_2)|-2C_5-1 \geq\]
\[ \geq |v_1u_1|+\frac{1}{\lambda}|u_1u_2| -C +|u_2v_2|-2C_5-1.\] This
implies that making $C':=C+2C_5+1$, $F$ is a $(\lambda,C')$
quasi-isometric map.

Clearly, by definition of $F(v_i)$, any branch point,
$b(F(u_1),F(u_2))$, of $F(u_1),F(u_2)$ is a cone point for
$F(v_1),F(v_2)$. Then, it suffices to bound
$l(b(F(v_1),F(v_2)))-l(b(F(u_1),F(u_2)))\leq C_6$ for some
constant $C_6=C_6(\lambda,A,r)$ and make $C_5=C_6+1$. Let us see
how this comes from \ref{quasiprop}.

Let $B_2=B(v_1)\cup B(v_2)$ and $B_1=B(b(u_1,u_2))$ with
$b(u_1,u_2)$ a branch point for $u_1,u_2$. Since $b(v_1,v_2)$
intersects both $B(u_1)$ and $B(u_2)$, it can be readily seen that
$|b(v_1,v_2)b(u_1,u_2)|\leq 1$ and therefore it is immediate to
check that \[\frac{r^2}{4} diam (B(b(v_1,v_2)))\leq diam
(B(b(u_1,u_2)))\leq \frac{4}{r^2} diam (B(b(v_1,v_2))).\] Also, by
lemma \ref{diam2},
\[diam(B(v_1)\cup B(v_2))\geq \frac{r^2}{4} diam (B(b(v_1,v_2))).\] Thus,
\[\frac{diam (B_2)}{diam (B_1)}\geq \frac{r^2 diam(B(v_1)\cup B(v_2))}{4 \cdot diam
(B(b(v_1,v_2)))}\geq \frac{r^4}{4^2},\] and applying
\ref{quasiprop},
\[\frac{diam (f(B_2))}{diam (f(B_1))}\geq
A\Big(\frac{r^4}{4^2}\Big)^\lambda.\]

Note that $F(b(u_1,u_2))$ is a vertex in a maximal level with
$f(B_1) \subset B(F(b(u_1,u_2)))$ and therefore, $diam (f(B_1))
\geq \frac{r}{4}diam (B(F(b(u_1,u_2))))$. Then it follows that
\[\frac{diam(B(b(F(v_1)), B(F(v_2))))}{diam (B(F(b(u_1,u_2))))}
\geq \frac{diam(f(B_2))}{diam (B(F(b(u_1,u_2))))}\geq
\frac{r}{4}A\Big(\frac{r^4}{4^2}\Big)^\lambda.\]

From this, together with the existence of a constant
$C_4=C_4(A,\lambda,r)$ such that
$|F(b(u_1,u_2))b(F(u_1),F(u_2))|\leq C_4$ which was proved in
Claim 3, is easily obtained $C_6$ proving the claim in the
unbounded case. If $Z,Z'$ are bounded it is trivial to check that
the claim is true also when we consider the image of $v$ for
$B(v)=Z$.

\underline{Claim 7}. $F|_V$ can be extended to a quasi-isometric
map on the hyperbolic approximation $X$. This is immediate since
we have already defined this map in a set which is 1-dense (i.e.
any point in the space is at distance $\leq 1$ to the set) in the
hyperbolic approximation. Also notice that if for every edge
$[v,v']$ in $X$ we define the image to be any geodesic path
$[F(v),F(v')]$ in $X$, $F$ is in fact a continuous quasi-isometric
map.

\underline{Claim 8}. $F$ is a quasi-isometry. Let us see that
there is a constant $C_7$ such that for any $v'\in V'$,
$|v'F(X)|\leq C_7$. Since $f$ is a homeomorphism there is some
ball and, in particular, some vertex $v_2\in V$ such that
$f(B(v_2))\subset B(v')$. Also, by \ref{mp}, the map is metrically
proper. In particular, there is a vertex $v_1\in V$ such that
there is a radial geodesic from $v_2$ to $v_1$ and $B(v')\subset
f(B(v_1))$ with $l(F(v_1)) \geq l(v')$.

The map $F$ is a $(\lambda,C)$ quasi-isometry on $[v_2,v_1]$.
Then, since $l(F(v_2))\geq l(v')\geq l(F(v_1))$, there is a vertex
$v\in [v_2,v_1]$ such that $|l(F(v))-l(v')| \leq \lambda + C$.
Clearly, $B(F(v))$ intersects $B(F(v_2))$ and, therefore, $B(v')$.
Hence, by Lemma \ref{intersect}, $d(v',F(X))\leq \lambda +C
+1=:C_7.$

\underline{Claim 9}. The induced map in the boundary is $f$, i.e.
$\partial_\infty F(z)=f(z) \, \forall z\in Z$. Any point $z\in Z$
can be identified with a point in $\partial_\infty X$ given by a
sequence of vertices $\{v_k\}_{k\in \mathbb{N}}$ such that $v_k\in
V_k$ and $z\in B(v_k)$, which clearly converges at infinity. The
sequence $F(v_k)$ also converges at infinity and hence, defines a
point $\partial_\infty F(z)$ in $Z'$, and it is necessarily
$f(z)$, which is, by construction, in $B(F(v_k))$ for every $k$.
\end{proof}

Let us recall Corollary 7.1.6 in \cite{BS}, in which visual
metrics with base points in the spaces are supposed on the
boundaries:

\begin{teorema}[Buyalo-Schroeder] \label{cor6} Visual hyperbolic geodesic spaces $X,X'$ with
bilipschitz equivalent boundaries at infinity are roughly similar
to each other. In particular, every visual hyperbolic space is
roughly similar to any hyperbolic approximation of its boundary at
infinity; and any two hyperbolic approximations of a complete
bounded metric space $Z$ are roughly similar to each other.
\end{teorema}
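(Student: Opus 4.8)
The plan is to reduce the two ``in particular'' assertions to the first one and then to build the rough similarity directly, using geodesic rays from the base points. For the reduction the key remark is that every visual metric $d$ is bounded, since $d(\xi,\xi')\le c_2\,a^{-(\xi|\xi')_o}\le c_2$; hence $(\partial_\infty X,d)$ is a complete bounded metric space and one can form its \emph{truncated} hyperbolic approximation $Y$, which is a geodesic $2\delta$-hyperbolic space with $\partial_\infty Y=\partial_\infty X$ and on which $d$ is itself a visual metric. One checks $Y$ is visual (straightforward from the construction, cf.\ \cite{BS}: every vertex lies within distance $1$ of a radial geodesic ray to a boundary point). Equipping \emph{both} boundaries with the \emph{same} metric $d$, the identity is trivially bi-Lipschitz, so the first statement gives that $X$ and $Y$ are roughly similar; the same remark applied to two hyperbolic approximations $Y_1,Y_2$ of a complete bounded $(Z,d)$ (whose boundaries are both $(Z,d)$ with $d$ visual) shows $Y_1,Y_2$ are roughly similar.

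It then remains to prove: if $f:\partial_\infty X\to\partial_\infty X'$ is bi-Lipschitz for visual metrics $d,d'$ with base points $o\in X$, $o'\in X'$ and parameters $a,a'>1$, then $X$ and $X'$ are roughly similar. Set $\mu:=\log_{a'}a>0$. First I would record the ``boundary estimate'': since $(\xi|\eta)_o=-\log_a d(\xi,\eta)+O(1)$ and bi-Lipschitz means $\log d'(f\xi,f\eta)=\log d(\xi,\eta)+O(1)$, one obtains $(f\xi\,|\,f\eta)_{o'}=\mu\,(\xi\,|\,\eta)_o+O(1)$ for all $\xi,\eta\in\partial_\infty X$, with constants depending only on $\mu$, the bi-Lipschitz constant and the $c_i$. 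Next I would fix, for each $\xi\in\partial_\infty X$, a geodesic ray $\gamma_\xi$ from $o$ to $\xi$, and for each $\eta'\in\partial_\infty X'$ a geodesic ray $\gamma'_{\eta'}$ from $o'$ to $\eta'$ (if the spaces are not proper, replace these by rough geodesic rays with uniform parameters, which exist in any geodesic hyperbolic space, cf.\ \cite{BS}; this changes nothing below). Using visuality of $X$, for $x\in X$ pick $\xi(x)$ with $|xo|\le(x|\xi(x))_o+D$ and define $F(x)$ to be the point of $\gamma'_{f(\xi(x))}$ at distance $\mu|xo|$ from $o'$ (so $F(o)=o'$).

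To check $F$ is a rough similarity with factor $\mu$ I would invoke the standard tripod estimate in a $\delta$-hyperbolic space: for points $p,q$ on geodesic rays from a base point towards $\zeta,\zeta'$ at distances $s,t$, one has $(p|q)=\min\{s,t,(\zeta|\zeta')\}+O(\delta)$ and $|pq|=s+t-2(p|q)$; moreover the choice of $\xi(x)$ forces $x$ to lie within $D+O(\delta)$ of the point of $\gamma_{\xi(x)}$ at distance $|xo|$ from $o$, so every choice in the construction is canonical up to bounded error. Feeding the boundary estimate into these identities in $X$ and $X'$ yields
\[|F(x)F(y)|=\mu|xo|+\mu|yo|-2\mu\min\{|xo|,|yo|,(\xi(x)|\xi(y))_o\}+O(1)=\mu\,|xy|+O(1),\]
the additive error depending only on $\delta,\delta',\mu,D,D'$ and the bi-Lipschitz and visual-metric constants. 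For the net property, given $x'\in X'$ I would pick $\eta'$ with $|x'o'|\le(x'|\eta')_{o'}+D'$, write $\eta'=f(\xi)$ (using that $f$ is a bijection), and take $x\in\gamma_\xi$ with $|xo|=\mu^{-1}|x'o'|$; then $F(x)$ lies on a ray towards $\eta'$ at distance $|x'o'|$ from $o'$, hence within $D'+O(\delta')$ of $x'$. The step I expect to be the main obstacle is not this computation but the careful handling of geodesic rays when $X,X'$ fail to be proper --- forcing the detour through rough geodesic rays --- together with the bookkeeping needed to verify that all choices are determined up to a uniform additive constant.
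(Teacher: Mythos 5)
The paper does not prove this statement: it is quoted verbatim as Corollary 7.1.6 of Buyalo--Schroeder \cite{BS} (note the attribution in the theorem header), so there is no in-paper proof to compare against. Judged on its own, your argument is a correct reconstruction of the standard proof, and it is sound modulo the routine facts you invoke. Two remarks. First, your route inverts the one in \cite{BS}: there one first shows that a visual hyperbolic space is roughly similar to a hyperbolic approximation of its boundary (a map $x\mapsto v$ with $\xi\in B(v)$, $(x|\xi)_o\ge |xo|-D$ and $l(v)\approx |xo|$), and then compares two approximations with bi-Lipschitz boundaries; that version never needs (rough) geodesic rays in a general non-proper geodesic hyperbolic space, because radial rays exist by construction in the approximation. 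Your direct construction does need rays in $X'$, so the reduction to $K(\delta)$-rough geodesic rays is genuinely load-bearing; it does work (take points at distance $t$ on geodesics $[o,x_n]$ for $n$ large depending on $t$), but you should state it as a lemma rather than a parenthesis. Second, two small points you assert without proof and should justify: that $(\partial_\infty X,d)$ is \emph{complete} for a visual metric $d$ (needed so that the hyperbolic approximation has boundary $\partial_\infty X$ at all --- this is Proposition 3.4.4-type material in \cite{BS}), and the "canonical up to bounded error" claim, which in the net-property step is exactly what lets you replace the chosen $\xi(x)$ by the prescribed $\xi$; the verification is the short Gromov-product computation showing two admissible choices $\xi_1,\xi_2$ satisfy $(\xi_1|\xi_2)_o\ge |xo|-D-\delta$. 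Also note your localization of $x$ on $\gamma_{\xi(x)}$ gives constant $2D+O(\delta)$, not $D+O(\delta)$; harmless but worth writing correctly. With these details filled in, the proof is complete.
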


From \ref{tma} and \ref{cor6}, we have:

\begin{cor} Let $X,X'$ be visual hyperbolic geodesic spaces. Then,
any PQ-symmetric homeomorphism $f:\partial_\infty X \to
\partial_\infty X'$ (with
respect to any visual metrics between their boundaries with base
points in $X$, $Y$ respectively) can be extended to a
quasi-isometry $F:X \to X'$.
\end{cor}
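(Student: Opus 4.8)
The plan is to deduce the statement by combining Theorem~\ref{tma} with the rough similarities supplied by Theorem~\ref{cor6}, so that the only genuinely new content is a short reduction plus some boundary bookkeeping. First I would fix notation and record two soft facts. Put $Z=\partial_\infty X$ and $Z'=\partial_\infty X'$, endowed with the fixed visual metrics based at $o\in X$ and $o'\in X'$. Since the Gromov product of any two boundary points is non-negative, a visual metric is automatically bounded (by its constant $c_2$); moreover the boundary at infinity of a Gromov hyperbolic space is complete with respect to a visual metric. Hence $Z$ and $Z'$ are complete bounded metric spaces, and I may form their (truncated) hyperbolic approximations $P$ and $P'$, which by the identification recalled earlier satisfy $\partial_\infty P=Z$ and $\partial_\infty P'=Z'$.

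Next I would invoke the two main results. By Theorem~\ref{tma}, the PQ-symmetric homeomorphism $f\colon Z\to Z'$ of complete metric spaces extends to a (continuous) quasi-isometry $F_0\colon P\to P'$ with $\partial_\infty F_0=f$. By Theorem~\ref{cor6}, since $X$ is visual with $\partial_\infty X=Z=\partial_\infty P$, the spaces $X$ and $P$ are roughly similar, and I would use that the rough similarity $g\colon X\to P$ can be chosen to induce the identity of $Z$ on the boundary; likewise $X'$ and $P'$ are roughly similar, and I would pick a rough similarity $h'\colon P'\to X'$ inducing the identity of $Z'$ (for instance a quasi-inverse of an identity-inducing rough similarity $X'\to P'$). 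I then set $F:=h'\circ F_0\circ g\colon X\to X'$.

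Finally I would check that $F$ has the desired properties. A rough similarity is in particular a quasi-isometry, and a composition of quasi-isometries is again a quasi-isometry (the net-image condition is preserved), so $F$ is a quasi-isometry $X\to X'$, and continuous if all three factors are chosen so. On the boundary, passing to $\partial_\infty$ is functorial under the canonical identifications, so $\partial_\infty F=\partial_\infty h'\circ\partial_\infty F_0\circ\partial_\infty g=\mathrm{id}_{Z'}\circ f\circ\mathrm{id}_{Z}=f$; that is, $F$ extends $f$, as required.

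The step I expect to need the most care is the boundary bookkeeping just used: namely that the rough similarity of Theorem~\ref{cor6} relating a visual space to a hyperbolic approximation of its own boundary may be taken to induce the identity --- and not merely some bilipschitz self-homeomorphism of $Z$, which would only give that $F$ extends a conjugate of $f$ rather than $f$ itself --- together with the functoriality of $\partial_\infty$ for quasi-isometries. Both are routine from the definitions of $\partial_\infty$ and of visual metrics, but they are exactly what upgrades ``$X$ and $X'$ are quasi-isometric'' to ``$F$ extends $f$''; if only the former, weaker conclusion is wanted, these points can be skipped.
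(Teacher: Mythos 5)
Your proposal is correct and follows exactly the route the paper takes: the paper derives this corollary directly from Theorem~\ref{tma} and Theorem~\ref{cor6}, composing the quasi-isometry between the hyperbolic approximations of the boundaries with the rough similarities identifying each visual space with the approximation of its own boundary. Your additional care about completeness/boundedness of the visual boundaries and about the rough similarities inducing the identity on the boundary is exactly the bookkeeping the paper leaves implicit.
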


The next result also appears in \cite{BS}:

\begin{teorema} Let $f:X\to Y$ be a quasi-isometric map of
hyperbolic spaces. Then $f$ naturally induces a well-defined map
$\partial_\infty f:\partial_\infty X \to \partial_\infty Y$ of
their boundaries at infinity which is PQ-symmetric with respect to
any visual metrics with base points in $X$, $Y$ or with base
points $\omega\in \partial_\infty X$, $\partial_\infty f(\omega)
\in
\partial_\infty Y$ respectively.
\end{teorema}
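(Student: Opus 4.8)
The plan is to define $\partial_\infty f$ on equivalence classes of sequences converging to infinity and to reduce the whole statement to a single distortion estimate for Gromov products. Assume $f$ is $(\lambda,A)$-quasi-isometric. First I would record the elementary fact that such a map distorts Gromov products in a controlled two-sided way: straight from $(x|x')_o=\tfrac12(|xo|+|x'o|-|xx'|)$ and the two quasi-isometry inequalities one obtains constants $\mu\ge 1$ and $C\ge 0$, depending only on $\lambda,A$, with
\[\tfrac{1}{\mu}(x|x')_o-C\ \le\ (f(x)|f(x'))_{f(o)}\ \le\ \mu\,(x|x')_o+C\]
for all $x,x',o\in X$. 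The left inequality applied to the pairs of a sequence $\{x_i\}$ shows that $\lim_{i,j}(x_i|x_j)_o=\infty$ forces $\lim_{i,j}(f(x_i)|f(x_j))_{f(o)}=\infty$, so $\{f(x_i)\}$ converges to infinity; applied to a pair $\{x_i\},\{x'_i\}$ it shows that equivalent sequences have equivalent images. Hence $\partial_\infty f(\xi):=[\{f(x_i)\}]$, for $\{x_i\}\in\xi$, is a well-defined map $\partial_\infty X\to\partial_\infty Y$. Taking $\liminf$ over realising sequences in \eqref{gr} and absorbing the $\delta$-ambiguity of the boundary Gromov product into the additive constant, the displayed estimate survives on the boundary: there are $\mu'\ge1$, $C'\ge0$ with
\[\tfrac{1}{\mu'}(\xi|\xi')_o-C'\ \le\ (\partial_\infty f(\xi)\,|\,\partial_\infty f(\xi'))_{f(o)}\ \le\ \mu'\,(\xi|\xi')_o+C'\]
for all $\xi,\xi'\in\partial_\infty X$. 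In particular $\partial_\infty f$ is injective, hence non-constant whenever $\partial_\infty X$ has at least two points (otherwise the statement is vacuous).

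Next I would pass to metric terms. Let $d$ be a visual metric on $\partial_\infty X$ based at $o$ with parameter $a$, and $d'$ a visual metric on $\partial_\infty Y$ based at $f(o)$ with parameter $a'$. The visual inequalities say that $(\xi|\xi')_o=-\log_a d(\xi,\xi')$ up to a bounded additive error, and similarly for $d'$; substituting the boundary Gromov-product estimate above and exponentiating turns it into a two-sided power-type comparison of $d(\xi,\xi')$ with $d'(\partial_\infty f(\xi),\partial_\infty f(\xi'))$, the exponents and multiplicative constants depending only on $\lambda,A,a,a',\delta$ and the visual constants. Since $\partial_\infty Y$ has finite $d'$-diameter, $\partial_\infty f$ is bounded. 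Now, given a non-trivial bounded pair $B_2\subset B_1\subset\partial_\infty X$, choosing points in $B_i$ that realise $\mathrm{diam}\,B_i$ up to a factor $2$ and applying this comparison to the corresponding distances shows that $\partial_\infty f$ satisfies the inequalities \eqref{quasiprop} of Proposition \ref{pq} with suitable constants; by Proposition \ref{pq}, $\partial_\infty f$ is PQ-symmetric.

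For base points at infinity, $\omega\in\partial_\infty X$ and $\partial_\infty f(\omega)\in\partial_\infty Y$, the scheme is the same but with the Gromov product $(\xi|\xi')_\omega$ relative to a boundary point; I expect this to be the main obstacle. That product is defined through a $\liminf$ along sequences tending to $\omega$ and is only well-defined up to bounded error, so one must first produce the distortion estimate $\tfrac{1}{\mu'}(\xi|\xi')_\omega-C''\le(\partial_\infty f(\xi)\,|\,\partial_\infty f(\xi'))_{\partial_\infty f(\omega)}\le\mu'\,(\xi|\xi')_\omega+C''$. This is done by combining the interior estimate with the standard bounded-error comparison between $(\cdot|\cdot)_\omega$ and $(\cdot|\cdot)_{x_k}$ for $x_k\to\omega$, and passing to the limit while keeping the accumulated additive errors under control. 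Once that is in place, the passage through the visual metrics based at $\omega$ and at $\partial_\infty f(\omega)$ and the appeal to Proposition \ref{pq} are word-for-word the finite-base-point argument. Everything else is bookkeeping of the $\delta$-errors and of the exponentiation constants; no new idea beyond the Gromov-product distortion lemma is needed.
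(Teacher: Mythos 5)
The paper offers no proof of this statement at all --- it is quoted from Buyalo--Schroeder (Theorem 5.2.17 of \cite{BS}) --- so your argument has to stand on its own, and it does not. The decisive gap is your opening ``elementary fact''. The two-sided estimate $\tfrac{1}{\mu}(x|x')_o-C\le (f(x)|f(x'))_{f(o)}\le \mu(x|x')_o+C$ does \emph{not} follow ``straight from the definition and the two quasi-isometry inequalities''. Writing $|xx'|=|xo|+|x'o|-2(x|x')_o$, the best the definition gives is
\[(f(x)|f(x'))_{f(o)}\ \le\ \tfrac{1}{\lambda}(x|x')_o+\tfrac{1}{2}\bigl(\lambda-\tfrac{1}{\lambda}\bigr)\bigl(|xo|+|x'o|\bigr)+\tfrac{3A}{2},\]
and the middle term is unbounded in terms of $(x|x')_o$ alone (the naive lower bound degenerates the same way). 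The true estimate holds for \emph{geodesic} hyperbolic spaces and its proof uses hyperbolicity essentially: one identifies $(x|x')_o$ with $d(o,[x,x'])$ up to $\delta$ and invokes stability of quasi-geodesics to compare $f([x,x'])$ with a geodesic $[f(x),f(x')]$. Without some such argument even the well-definedness of $\partial_\infty f$ is left unjustified.

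Second, even granting that estimate, your deduction of PQ-symmetry from it is invalid. A two-sided multiplicative-plus-additive control of Gromov products only yields a snowflake-type comparison $c_1 d(\xi,\xi')^{\alpha\mu}\le d'(f\xi,f\xi')\le c_2 d(\xi,\xi')^{\alpha/\mu}$ with \emph{different} exponents, and this does not control ratios of distances: if $(\xi|\beta)_o=R$ and $(\xi|\alpha)_o=R+s$ (so $t\approx a^{-s}$), the estimate only bounds $d'(f\xi,f\alpha)/d'(f\xi,f\beta)$ by a constant times $(a')^{(\mu-1/\mu)R-s/\mu}$, which blows up as $R\to\infty$ with $t$ fixed. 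So neither the $\eta$-condition nor the diameter-ratio inequalities of Proposition \ref{pq} follow from your comparison. The missing idea is the finer three- and four-point control: one must show that the quasi-isometry roughly preserves the relevant \emph{differences} of Gromov products (equivalently, cross-differences of quadruples) via the geodesic/tripod picture, which is exactly how Buyalo--Schroeder argue; that same control is what is needed for the base-point-at-infinity case you defer at the end.
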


From this and \ref{tma}, we conclude that,

\begin{cor} Two visual hyperbolic geodesic spaces $X,\ Y$ are quasi-isometric
if and only if there is a PQ-symmetric homeomorphism $f$ with
respect to any visual metrics between their boundaries with base
points in $X$, $Y$ respectively.
\end{cor}

\end{document}